\newtheorem{theorem}{Theorem}[section]
\newtheorem{definition}[theorem]{Definition}
\newtheorem{proposition}[theorem]{Proposition}
\newtheorem{remark}[theorem]{Remark}
\newcommand{\R}{\mathbb{R}}
\title{\bf Slice monogenic theta series}
\author{
Fabrizio Colombo\\
Politecnico di Milano\\
Dipartimento di Matematica\\
Via Bonardi, 9\\
20133 Milano, Italy\\
fabrizio.colombo@polimi.it\and
Rolf S\"{o}ren Krau{\ss}har\\
Chair of Mathematics\\
Erziehungswissenschaftliche Fakult\"at\\
Universit\"at Erfurt\\
Nordh\"auser Str. 63\\
99089 Erfurt, Germany\\
soeren.krausshar@uni-erfurt.de\and
Irene Sabadini\\
Politecnico di Milano\\
Dipartimento di Matematica\\
Via Bonardi, 9\\
20133 Milano, Italy\\
irene.sabadini@polimi.it
}
\begin{document}
\maketitle
\begin{abstract}
In this paper we introduce a generalization of theta series in the context of the slice monogenic function theory in $\mathbb{R}^{n+1}$ where me make use of the so-called $*$-exponential function in a hypercomplex variable. Together with the Eisenstein  and Poincar\'e series that we introduced in a previous paper, the theta series construction in this paper completes the fundament of the basic theory of modular forms in the slice monogenic setting. We introduce a suitable generalized Poisson summation formula in this framework and we apply a  properly adapted Fourier transform. As a direct application we prove a transformation formula for slice monogenic theta series. Then we introduce a family of conjugated theta functions. These are used to construct a slice monogenic generalization of the third power of the Dedekind eta function and of the modular discriminant. We also investigate their transformation behavior. Finally, we show that these theta series are special solutions to a  generalization of the heat equation associated with the slice derivative. We round off by discussing the monogenic case.
\end{abstract}
{\bf Keywords}: slice monogenic functions, generalized exponential functions, generalized theta series, theta transformation formula, theta functions, generalized quasi-modular forms

\noindent {\bf Mathematical Review Classification numbers}: 30G35, 11F04

\section{Introduction}

There are several different ways to generalize classical complex function theory together with its related toolkit for tackling classical applications in the two-dimensional framework to higher dimensional settings. One possibility is offered by complex analysis in several complex variables. Another important line of investigation considers functions that take values in (non-commutative) Clifford algebras.

\par\medskip\par

In classical Clifford analysis one considers null solutions to the higher dimensional generalized Cauchy-Riemann operator,
see for instance \cite{bds,ghs}. The associated functions are often called monogenic functions or hyperholomorphic functions. Their related function theory provides a lot of powerful tools like a Cauchy integral formula to successfully tackle many boundary value problems of harmonic functions in higher dimensional Euclidean spaces.

Additionally, up from the 1990s one also started to intensively consider versions of the Cauchy-Riemann or Dirac operator equipped with the hyperbolic metric \cite{Leutwiler}, and more generally, classes of holomorphic Cliffordian functions which all satisfy a homogeneous or an inhomogeneous Weinstein type equation \cite{cgk}. The latter function classes can also be related to eigensolutions of the Laplace-Beltrami operator.

\par\medskip\par

Apart from these versions of monogenic function theories, more recently, a rapidly growing attention has also been paid to the class of slice hyperholomorphic functions, see for instance
\cite{BOOK_ACS,bookfunctional,BOOK_ENTIRE,BOOKGST},  which has become a counterpart theory to the above mentioned function theories. Slice hyperholomorphic functions offer  different applications to operator theory, in particular to spectral theory for several operators as well as to quaternionic operators. See for example \cite{BOOK_ACS,bookfunctional}.
\par\medskip\par

In this paper we deal with slice monogenic functions, namely with slice hyperholomorphic functions with values in a Clifford algebra, which were introduced in \cite{slicecss}. The quaternionic case has been studied and introduced before, see \cite{GS2}.  For more details, see again the aforementioned books.

\par\medskip\par

Both monogenic  and slice monogenic function theories are natural generalizations of classical complex
function theory but they are quite different from each other.
Possible relations between the two theories
were  developed in the context of the Fueter-Sce mapping theorem,  cf. \cite{fueter1,qian,sce}. The Fueter-Sce mapping allows us to construct monogenic functions starting from holomorphic functions and its inversion \cite{CoSaSo} generates slice monogenic functions from axially monogenic functions.

\par\medskip\par

In our previous paper \cite{CKS2016} we described the invariance behavior of slice monogenic functions under arithmetic subgroups of the Ahlfors-Vahlen group that take axially symmetric domains into each other. We also explained how one can construct slice monogenic Eisenstein  and Poincar\'e series that serve as examples of slice monogenic modular forms on these arithmetic groups.
\par\medskip\par
This also provides a nice analogy to similar constructions in higher dimensional function theories in Clifford algebras, in which one also could successfully introduce monogenic and more in general holomorphic Cliffordian Eisenstein  and Poincar\'e series, cf. \cite{cgk,Kra2004}. These in turn could also be connected to particular Maa{\ss} forms on the Ahlfors-Vahlen group, \cite{cgk,EGM90}.
\par\medskip\par
While in complex analysis of one and several complex variables there also exists the possibility to construct modular forms by theta series and theta functions (see for example \cite{f,Freitag,Krieg85,Terras}), a similar analogue of theta series could not be introduced in the classical Clifford analysis setting so far. A main obstacle consisted in the fact that one was not able to find an appropriate monogenic generalization of the exponential function that on the one hand should be periodic and that additionally should have the property $\exp(z+w)=\exp(z)\exp(w)$ at the same time. In general, monogenic functions do not remain monogenic when forming their product. This is consequence of the non-commutativity.
\par\medskip\par
Now, the great advantage of the slice monogenic function theory consists in the fact that one has a product construction in terms of the so-called $*$-product which elegantly compensates the non-commutativity by a suitable construction that respects slice monogenicity. This additional product structure admits the construction of the so-called $*$-exponential function, cf. \cite{AF,BOOK_ENTIRE}.
\par\medskip\par
In this paper we use this $*$-exponential function to introduce two kinds of generalizations of the theta series in the context of slice monogenic function theory in $\mathbb{R}^{n+1}$. Together with the Eisenstein  and Poincar\'e series that we introduced in \cite{CKS2016}, the theta series constructions in this paper nicely complete the fundament of the basic theory of modular forms in the classical slice monogenic setting. We first show that each of these two series actually converges on an axially symmetric domain that canonically generalizes upper half-plane to the slice monogenic setting in higher dimensions.
\par\medskip\par
We introduce a properly adapted generalized Poisson summation for the slice monogenic framework. To this end, we consider an intrinsic  Fourier transform.

\par\medskip\par
 As a direct application we are in position to prove a transformation formula for slice monogenic theta series relating the theta series at a point $x \in \mathbb{R}^{n+1}$ with its value at the inverted point $ \pm x^{-1}$. Additionally to their invariance under the inversion (up to a scaling factor) these series also exhibit a periodicity (either radial or translation periodicity) in the paravector variable. In this sense, the slice monogenic theta series are quasi-modular forms with respect to these transformations.
 \par\medskip\par

Furthermore, we introduce a family of conjugated theta functions and study their transformation behavior. These functions then in turn serve as building blocks to construct further examples of slice monogenic quasi-modular forms in terms of the star product of slice monogenic functions. In particular, we use them to introduce a slice monogenic generalization of the modular discriminant. This provides a key ingredient for further research in the development of the basic theory of automorphic forms in the slice monogenic context.

\par\medskip\par
	
Next  we also show that these theta series are solutions to a generalization of the heat equation associated to the slice derivative, hence providing us also with an application to partial differential equations.

\par\medskip\par

Finally, we use the Fueter-Sce theorem to introduce the monogenic generalization of the theta series and prove a transformation formula in the quaternionic setting. The transformation behavior however is much more complicated than in the slice monogenic setting, involving a sum of several terms and derivatives.

\par\medskip\par
As a future perspective we hope that the new constructions given in this paper will enable us to tackle a series of number theoretical problems arising recently in the context of generalized theta series, functions and integrals. In particular, harmonic theta series and applications to generalized error functions currently represent an important topic of interest in the number theory community, see for example \cite{ABMP,DSMS,FunkeKudla}, just to mention a few of an impressively large amount of papers that have appeared over the last years in this direction. In this sense we hope that the toolkit of slice monogenic function theory could also provide us with some input for the further development in the future.

\section{Preliminaries}
In this section we introduce some preliminary results on
M\"obius transformations in $\R^{n+1}$ and recall the related analyticity concepts within classes of Clifford algebra valued monogenic and slice monogenic functions.

\subsection{Basics on Clifford algebras and notations}

A basis for the real Clifford algebra $\mathbb{R}_{n}$, considered as a vector space, is given by the element $e_{\emptyset}=1$, the canonical basis elements $e_1,e_2,\ldots, e_n$ which satisfy $e_ie_j+e_je_i=-2\delta_{ij}$, as well as all their possible products $e_1,e_2,\ldots, e_n,e_1 e_2,\ldots , e_1 e_n,\ldots , e_{n-1} e_n,\ldots, e_1 e_2 \cdots e_n$. In compact form, the set containing the products is described by $\{e_A \mid  A \subseteq\{1,\ldots,n\}\}$ where $e_{\emptyset} =1$. Thus, an arbitrary element of $\mathbb{R}_n$ has the form $a = \sum\limits_{A \subseteq\{1,\ldots,n\}} a_A e_A$  with real components $a_A$. Here we have set $e_A := e_{l_1}\cdots e_{l_r}$ where $A=(l_1,\ldots ,l_r)$ is a multi-index and the integers $l_1,\ldots ,l_r$ satisfy $1 \le l_1 < \cdots < l_r \le n $.
Next we introduce the Clifford conjugation by $\overline{a} := \sum\limits_A a_A \overline{e_A}$ where
$$
\overline{e_A} = \overline{e_{l_r}} \cdots \overline{e_{l_1}},\; \overline{e_j}=-e_j,\;\;j=0,\ldots,n,\;\;\overline{e_{\emptyset}}=e_{\emptyset}=1.
$$
Furthermore, the Clifford reversion is defined by
$\tilde{a} := \sum\limits_A a_A \tilde{e_A}$ where
$$
\widetilde{e_A} = e_{l_r} \cdots e_{l_1},\; \tilde{e_j}= e_j,\;\;j=1,\ldots,n,\;\;\tilde{e_{\emptyset}}=e_{\emptyset}=1.
$$
We also have $\tilde{a} = \sum_A (-1)^{|A|(|A|-1)/2} a_A e_A$.
Furthermore, we consider the main involution defined by
$$
{e_A}' = {e_{l_1}}' \cdots {e_{l_r}}',\; {e_j}'=-e_j,\;\;j=1,\ldots,n,\;\;{e_{\emptyset}}'=e_{\emptyset}=1.
$$
One has the relation $\overline{a} = \tilde{a'} = \tilde{a}'$.

We will identify the set of paravectors, i.e. elements of the form $x_0+x_1e_1+\cdots +x_ne_n$ with elements in the Euclidean space $\mathbb{R}^{n+1}$ by the isomorphism $x_0+x_1 e_1+\cdots +x_n e_n \mapsto (x_0,x_1,\ldots, x_n)$.
We use the set
$$
\mathbb{S}^{n-1}=\{\omega=a_1 e_1 + \cdots + a_n e_n\ : \ a_1^2+\ldots+a_n^2=1 \}
$$
which can be identified with a sphere in the reduced vector space $\mathbb R^n$ and whose elements $\omega$ all satisfy $\omega^2=-1$. In the complex case addressed by $n=1$ this set simply reduces to the discrete set $\{e_1,-e_1\}$. As soon as $n >1$ this set gets a connected sphere.
\\
The norm $\| x \|$ of a paravector $x$  is  $\|x\| = \left(\sum\limits_{i=0}^n x_i^2\right)^{1/2}$ namely the usual Euclidean norm. This norm can be extended to a pseudo-norm on the whole Clifford algebra by defining $\|a\| := \sqrt{\sum_{A} |a_A|^2}$.
Each non-zero paravector is invertible with inverse $x^{-1} = \frac{\overline{x}}{\|x\|^2}$.

\subsection{M\"obius transformations in $\R^{n+1}$}

As it is broadly well-known, in dimension $n \ge 3$ the set of conformal maps coincides with that of M\"obius transformations. Using Clifford algebras, M\"obius transformations can be written very elegantly in terms of the action of $(2\times 2)$ Clifford algebra valued matrices $\left(\begin{array}{cc} a & b \\ c & d \end{array}\right)$ whose coefficients satisfy special conditions which will be listed below.
The associated group is the general Ahlfors-Vahlen group, cf. \cite{Av,EGM87}.

 \begin{definition}\label{AVcond} The group $GAV(\mathbb{R}^{n+1})$ is the set of matrices $\left(\begin{array}{cc} a & b \\ c & d \end{array}\right)$
equipped with the product of matrices, whose coefficients $a,b,c,d\in\R_{n}$ satisfy the so-called Ahlfors-Vahlen conditions:
\begin{enumerate}
\item[(i)] \ $a,b,c,d$ are products of paravectors from $\mathbb{R}^{n+1}$ (including $0$);
\item[(ii)] \ $a\tilde{d} - b\tilde{c} \in \R \setminus \{ 0 \} $;
\item[(iii)] \ $a\tilde{b},c\tilde{d}  \in \mathbb{R}^{n+1}$ .
\end{enumerate}
\end{definition}
Following for example \cite{EGM87},
M\"obius transformations are defined as action of $GAV(\mathbb{R}^{n+1})$ on $ \R^{n+1}$ by
$$
\left( \left(\begin{array}{cc} a & b \\ c & d \end{array}\right),x\right) \mapsto M\langle x\rangle = (ax+b)(cx+d)^{-1}\in \R^{n+1}.
$$
In the case where $a,b,c,d$ are products of vectors from $\mathbb{R}^n$ the associated group $GAV( \mathbb{R}^n)$ acts transitively on right half-space $x_0>0$, or, respectively, the group $GAV(\mathbb{R} \oplus \mathbb{R}^{n-1})$ acts transitively on upper half-space $x_n > 0$.

Following $\cite{EGM87}$ and others, the whole group $GAV(\mathbb{R}^{n+1})$ can be generated by  four different types of matrices each inducing particularly elementary translations, the Kelvin inversion, rotations and dilations. For details we also refer the interested reader to our recent paper \cite{CKS2016} which treats particular applications to the slice monogenic framework.

\subsection{Two classes of hypercomplex functions}

In this subsection we briefly recall two different basic concepts that generalize holomorphic function theory to higher dimensional real vector spaces.  Concretely speaking, we look at Clifford algebra valued monogenic functions and at Clifford algebra valued slice monogenic functions; the latter function class stands in the main focus of this paper. We briefly explain the connections between these two function classes as well as some of their important properties concerning this paper. In particular, we recall Fueter's theorem that provides us with a key link between holomorphic and slice monogenic functions including a constructive method to obtain slice monogenic functions from holomorphic ones.  We start by recalling the definition of monogenic functions, cf. for instance \cite{bds,ghs}:

\par\medskip\par

{\bf Monogenic functions.} Let $U \subseteq \mathbb{R}^{n+1}$ be an open set. Then a real differentiable function $f: U \rightarrow \mathbb{R}_{n+1}$ that  satisfies $Df = 0$ (respectively $fD = 0$), where $D := \frac{\partial }{\partial x_0}   + e_1 \frac{\partial }{\partial x_1} + \cdots + e_n\frac{\partial }{\partial x_n}$ is the generalized Cauchy-Riemann operator, is called left monogenic (respectively right monogenic), see \cite{bds,ghs}. Due to the non-commutativity of $\mathbb{R}_{n+1}$ for $n>1$, the two
classes of functions do not coincide. However $f$ is left monogenic if and only if $\tilde{f}$ is right monogenic. The generalized Cauchy-Riemann operator factorizes the Euclidean Laplacian $\Delta = \sum_{j=0}^n \frac{\partial^2}{\partial x_j^2}$, since $D\bar D=\bar D D = \Delta$. Every real component of a monogenic function hence is harmonic.

An important property of the $D$-operator is its quasi-invariance under M\"obius transformations acting on the complete Euclidean space $\mathbb{R}^{n+1}$.
\begin{theorem}(cf. {\rm \cite{r85}}). Let $M \in GAV(\mathbb{R}^{n+1})$ and let $f$ be a left monogenic function in the variable
$y=M \langle x \rangle=(ax+b)(cx+d)^{-1}$. Then
\begin{equation}\label{moninv}
g(x):=\frac{\widetilde{cx+d}}{\|cx+d\|^{n+1}}f(M \langle x \rangle)
\end{equation}
 is left  monogenic in the variable $x$ for any $M \in GAV(\mathbb{R}^{n+1})$.
\end{theorem}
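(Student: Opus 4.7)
The plan is to exploit the generation structure of $GAV(\mathbb{R}^{n+1})$ recalled just before the statement: the group is generated by four elementary families of matrices (translations, rotations, dilations, and the Kelvin inversion). If the weight factor $\widetilde{cx+d}/\|cx+d\|^{n+1}$ behaves as a cocycle under composition of M\"obius transformations, then it suffices to check that $g$ is left monogenic separately for each generating type. My first step would therefore be to verify this cocycle identity: for $M = M_2 M_1$ one has an Ahlfors--Vahlen identity of the form $c\,x + d = (c_2\,M_1\langle x\rangle + d_2)(c_1 x + d_1)$, and combined with the multiplicativity of $\|\cdot\|$ and the antihomomorphism property of the tilde on products of paravectors this gives the desired cocycle.

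With this reduction in place, the cases of translations ($c=0$, $d=1$), rotations, and dilations are easy. For translations $g(x)=f(x+b)$ is monogenic because $D$ commutes with translations. For rotations and dilations $c=0$, the weight $\widetilde d/\|d\|^{n+1}$ is a constant Clifford number that passes through $D$, and a short chain-rule computation using the appropriate covariance of $D$ under orthogonal transformations and scaling of $\mathbb{R}^{n+1}$ yields the conclusion.

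The main obstacle, and the computational core of the argument, is the Kelvin inversion, for which $c=1$, $d=0$, $a=0$, $b=-1$, so that $M\langle x\rangle=-x^{-1}$ and the weight becomes $E(x):=\overline{x}/\|x\|^{n+1}$. Up to a universal scalar, $E$ is the Cauchy kernel of $D$, so in particular $DE=0$ on $\mathbb{R}^{n+1}\setminus\{0\}$. I would then compute $Dg(x)=D[E(x)\,f(-x^{-1})]$ via the Clifford Leibniz rule; the term where $D$ acts only on $E$ drops out, and the remaining contribution is $\sum_{j=0}^n e_j\,E(x)\,\partial_{x_j}[f(-x^{-1})]$ with $e_0:=1$. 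Differentiation of the inversion gives $\partial_{x_j}(-x^{-1}) = x^{-1} e_j x^{-1}$, and substituting reveals that the sum is a double contraction of the monogenic derivatives of $f$ with the basis vectors $e_j$ sandwiched between two copies of $x^{-1}$.

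The delicate point is the algebraic identity that collapses this double sum, using $e_i e_j + e_j e_i = -2\delta_{ij}$ together with $\overline{x}=\|x\|^2 x^{-1}$, into an expression proportional to $(D_y f)(-x^{-1})$, which vanishes by hypothesis. This is precisely where the exponent $n+1$ in the weight is forced: any other power of $\|x\|$ would produce a non-cancelling lower-order remainder. Once this identity is in hand, the Kelvin-inversion case, and hence via the generator reduction the whole theorem, is complete.
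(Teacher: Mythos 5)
The paper does not actually prove this statement: it is quoted from Ryan's 1985 paper, so there is no internal proof to compare your argument against. Your strategy --- reduce to the generating types of $GAV(\mathbb{R}^{n+1})$ via the cocycle identity $J(M_2M_1,x)=J(M_1,x)\,J(M_2,M_1\langle x\rangle)$, which follows from $(c_2M_1\langle x\rangle+d_2)(c_1x+d_1)=cx+d$ together with the multiplicativity of the pseudo-norm on products of paravectors and the anti-automorphism property of the involution, and then treat the Kelvin inversion as the only hard case --- is exactly the standard route in the literature, and it is sound in outline. Two steps, however, are not carried out, and one of them is justified incorrectly. For rotations the weight $\widetilde{d}/\|d\|^{n+1}$ is a \emph{non-central} Clifford constant, and a left constant does not ``pass through'' $D$: one has $D(c\,h)=\sum_j e_j\,c\,\partial_j h\neq c\,Dh$ in general. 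What is really needed there is the spin covariance of $D$ under $x\mapsto axa$, in which the constant is precisely the intertwiner; this is classical but it is a computation, not a triviality. For the inversion, the entire content of the theorem sits in the contraction identity you defer to your last paragraph --- collapsing $\sum_{j,k}e_j\,E(x)\,[x^{-1}e_jx^{-1}]_k\,(\partial_{y_k}f)(-x^{-1})$ into a left (and right) multiple of $(D_yf)(-x^{-1})$ --- and you assert it rather than prove it. Your diagnosis that the exponent $n+1$ is forced at this point is correct, but as written the argument stops exactly where the theorem begins.

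There is also a point you silently corrected which affects whether the statement as printed is even true. For the inversion generator $cx+d$ is a paravector, and the reversion fixes paravectors, so the literal weight $\widetilde{cx+d}/\|cx+d\|^{n+1}$ equals $\pm x/\|x\|^{n+1}$, which is \emph{not} the Cauchy kernel of $D=\partial_{x_0}+\sum_je_j\partial_{x_j}$ and is not left monogenic (take $f\equiv 1$; already for $n=1$ the factor $\widetilde{cz+d}/\|cz+d\|^{2}=1/\overline{cz+d}$ is antiholomorphic, whereas $\overline{cz+d}/\|cz+d\|^{2}=1/(cz+d)$ is what one wants). Your $E(x)=\overline{x}/\|x\|^{n+1}$ uses the Clifford conjugation, which is the correct involution in the paravector formalism; the reversion is the right choice only in the pure-vector, Dirac-operator formalism. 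So your proof, once completed, establishes the theorem with $\overline{cx+d}$ in the numerator; you should say so explicitly rather than let the discrepancy pass unremarked.
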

Notice that the transformation~(\ref{moninv}) is up to a constant the most general transformation that sends a monogenic function again to a monogenic one by applying a M\"obius transformation in the argument. It requires the particular exponent $n+1$ in the expression of the denominator.
\newpage
{\bf Slice monogenic functions}.
\par\medskip\par
As we mentioned in the Introduction, the class of slice monogenic functions is also widely studied nowadays, see for example the aforementioned books and the references therein for more details.

\begin{definition}\label{slicemon2}
Let $U$ be an open set in $\mathbb{R}^{n+1}$, $f:U\to\mathbb{R}_{n}$.
Let $\omega\in \mathbb{S}^{n-1}$ and let $f_\omega$ be the restriction of $f$ to the
complex plane $\mathbb C_\omega=\{u+\omega v, \ |\ u,v\in\mathbb R \}$.
We say that $f$ is a (left) slice monogenic function if for every
$\omega\in \mathbb{S}^{n-1}$
\begin{equation}\label{def1}
D_{\omega} f_\omega (u +\omega v):=\frac{1}{2}\left(\frac{\partial }{\partial u}+\omega\frac{\partial
}{\partial v}\right)f_\omega (u +\omega v)=0,
\end{equation}
for $u+\omega v\in U$. The set of slice monogenic functions on $U$ is denoted by $\mathcal{SM}(U)$.
\end{definition}
Slice monogenic functions such that $f:\ U\cap\mathbb{C}_\omega\to \mathbb{C}_\omega$ for all $I\in\mathbb{C}_\omega$ are called {\em intrinsic}.
For our purposes, it is convenient to put restrictions on the open sets $U$ that may be considered, namely we shall consider {\em axially symmetric} sets. Let $\omega_0\in \mathbb{S}^{n-1}$. $U$ is axially symmetric if $u +\omega_0 v\in U$ implies that $u +\omega v\in U$ holds for all $\omega\in U$. Moreover, a domain $U$ is called a {\em slice domain}, if $U\cap \mathbb C_\omega$ is connected for all $\omega\in U$.

As it is well known, on axially symmetric slice domains a function is slice monogenic in the standard sense if and only if it of the form $f(u +\omega v)= \alpha(u,v)+\omega \beta(u,v)$, cf. \cite{slicecss}.
\\
So, we consider the following adapted definition, see \cite{GP}:
\begin{definition}
Let $U\subseteq  \mathbb{R}^{n+1}$  be an axially symmetric domain, let $D\subseteq\mathbb{R}^2$ be an open set such that $u +\omega v\in U$ whenever $(u,v)\in D$ and let $f : U \to \mathbb{R}_{n}$.
The function $f$ is a slice function if
there exist two functions $\alpha, \beta: D\subseteq\mathbb{R}^2 \to \mathbb{R}_{n}$ satisfying the following even-odd conditions
$\alpha(u,v)=\alpha(u,-v)$, $\beta(u,v)=-\beta(u,-v)$
such that
\begin{equation}\label{alphabeta}
f(u +\omega v)= \alpha(u,v)+\omega\beta(u,v).
\end{equation}
If, in addition, the functions $\alpha$ and $\beta$ are differentiable and satisfy the Cauchy-Riemann system
 \begin{equation}\label{CRSIST_qua}
\left\{
\begin{array}{c}
\partial_u \alpha-\partial_v\beta=0\\
\partial_u \beta+\partial_v\alpha=0\\
\end{array}
\right.
\end{equation}
the function $f$ is called slice monogenic.
The class of slice monogenic functions defined on $U$ will be denoted by $\mathcal{SM}(U)$.
\end{definition}
We note that slice monogenic intrinsic functions are characterized by the condition that $\alpha$ and $\beta$ are real-valued functions.\\
More generally, let $U$ be an axially symmetric open set. Furthermore, let $f:\ U\to\mathbb{R}_{n}$ be
 a function of the form $f(u+\omega v)= \alpha(u,v)+\omega\beta(u,v)$ with $\alpha(u,v)=\alpha(u,-v)$, $\beta(u,v)=-\beta(u,-v)$.  We say that the slice function $f$ belongs to the class $\mathcal C^k$ on $U$ if $\alpha, \beta$ belong to the  class $\mathcal C^k$ on $D$.
\par\medskip\par

As in the monogenic case, the pointwise multiplication of two slice monogenic functions does not give a slice monogenic function in general. However, in the slice monogenic context it is possible to define a suitable product, called the $*$-product, which is an inner operation on the set of slice monogenic functions. It is defined as follows. Let  $U\subseteq \mathbb{R}^{n+1}$ be an axially symmetric set and let $f,g\in\mathcal{SM}(U)$ with $f(x)=f(u+\omega v)= \alpha(u,v)+\omega\beta(u,v)$,
$g(z)=g(u +\omega v)=  \gamma(u,v)+\omega\delta(u,v)$. Then one defines, see \cite{bookfunctional, GP}
\begin{equation}\label{*prod}
(f*g)(x)=(f*g)(u +\omega v)= (\alpha(u,v)\gamma(u,v) -\beta(u,v)\delta(u,v))+\omega(\beta(u,v)\gamma(u,v)+\alpha(u,v)\delta(u,v)).
\end{equation}
This multiplication coincides with the standard notion of multiplication of two polynomials or of two converging power series in a non-commutative ring, see \cite{fliess}. Specifically, if $f(x)=\sum_{k\geq 0} x^k a_k$ and
$g(x)=\sum_{k\geq 0} x^k b_k$, then
$$
(f*g)(x)=\sum_{n\geq 0}x^n \left(\sum_{k=0}^n a_k b_{n-k}\right).
$$
It is also possible to define an inverse with respect to the $*$-product.
For further information on slice monogenic functions we refer the reader to \cite{bookfunctional}.
We note that the definition in \eqref{*prod} works more in general for slice functions (see \cite{GP}).

\begin{remark}{\rm As we discussed in the Introduction, the class of slice monogenic functions and the class of monogenic functions can be related.

Let $U$ be an axially symmetric open set in $\mathbb R^{n+1}$ and let $f$ be slice monogenic in $U$. By the Fueter-Sce mapping theorem, the function $\Delta^{n-1/2}f$ is monogenic, see \cite{CoSaSo}. To be more precise, $\Delta^{n-1/2}f$ is axially monogenic. Given an axially monogenic function $\breve f$, it makes sense to ask whether it is possible to construct a so-called Fueter primitive, that is a slice monogenic function $f$ such that $\Delta^{n-1/2}f=\breve f$. The answer is positive and the construction of the Fueter primitive is given in \cite{CoSaSo}. This result can be further generalized in order to include the general case of monogenic functions.
}
\end{remark}
\par\medskip\par

\subsection{M\"obius transformations preserving axial symmetry}

In this section we briefly recall which concrete subgroup of M\"obius transformations leaves the axial symmetry property of a set invariant. The direct analogue of the general Ahlfors-Vahlen group in this particular context is the set stabilizer of the $x_0$-axis. The latter is generated by the inversion, dilations, translations in the $x_0$-direction only, and by modified rotations. From  \cite{CKS2016} we recall:
\begin{definition}
The group $GRAV(\mathbb{R}^{n+1})$ is defined by
\begin{equation}\label{GRAVdef}
GRAV(\mathbb{R}^{n+1}) := \Bigg\langle  \left(\begin{array}{cc} 1 & b  \\ 0 & 1 \end{array}\right),
\left(\begin{array}{cc} a & 0 \\ 0 & {{a}}^{-1} \end{array}\right),
\left(\begin{array}{cc} 0 & 1 \\ -1 & 0 \end{array}\right),
\left(\begin{array}{cc} \lambda & 0 \\ 0 & \lambda^{-1}  \end{array}\right)
 \Bigg\rangle.
\end{equation}
where $b \in \R$, $a \in \mathbb{S}^{n-1}$ and $\lambda  \in \R \backslash \{0\}$.
\end{definition}
\par\medskip\par
\begin{proposition} (See {\rm \cite{CKS2016}, Prop. 2.13})
The elements in the group  $GRAV(\mathbb{R}^{n+1})$ take axially symmetric sets into axially symmetric sets.
\end{proposition}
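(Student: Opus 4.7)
The plan is to reduce the statement to the four generators listed in the definition of $GRAV(\mathbb{R}^{n+1})$ and verify each one separately, since a composition of maps that send axially symmetric sets to axially symmetric sets obviously inherits the same property.

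The key structural criterion I would formulate first is this: a map $\varphi:\mathbb{R}^{n+1}\to\mathbb{R}^{n+1}$ sends axially symmetric sets to axially symmetric sets provided that for every paravector $x=u+\omega v$ with $\omega\in\mathbb{S}^{n-1}$ the image can be written as $\varphi(u+\omega v)=u'(u,v)+\sigma(\omega)\,v'(u,v)$, where $u',v'$ depend only on $u,v$ and $\sigma$ is a bijection of $\mathbb{S}^{n-1}$. Indeed, if $U$ is axially symmetric and $y=\varphi(x)$ with $x\in U$, then for any $\tilde\omega\in\mathbb{S}^{n-1}$ the point $u+\sigma^{-1}(\tilde\omega)\,v$ lies in $U$ by axial symmetry, and $\varphi$ sends it to $u'+\tilde\omega v'$, so the full axial orbit of $y$ lies in $\varphi(U)$.

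With this criterion in hand, three of the four generators are immediate. Translation $x\mapsto x+b$ with $b\in\mathbb{R}$ gives $u'=u+b$, $v'=v$, $\sigma=\mathrm{id}$. The dilation generator, since $\lambda$ is real and hence central, reduces to $x\mapsto \lambda^2 x$, again with $\sigma=\mathrm{id}$. The inversion $x\mapsto -x^{-1}$ is handled by $x^{-1}=\overline{x}/\|x\|^2$ together with $\overline{u+\omega v}=u-\omega v$, yielding
\[
-x^{-1}=\frac{-u+\omega v}{u^2+v^2},
\]
so again $\sigma=\mathrm{id}$, $u'=-u/(u^2+v^2)$, $v'=v/(u^2+v^2)$.

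The one case where the criterion is not a mere transcription is the generator $x\mapsto a x a$ with $a\in\mathbb{S}^{n-1}$, and this is the step I expect to require the most care. Here I would use $a^2=-1$ together with the Clifford anticommutation $a\omega+\omega a=-2\langle a,\omega\rangle$, valid for $\omega$ in $\mathbb{R}^n\subset\mathbb{R}_n$, to compute
\[
a(u+\omega v)a = -u+(\omega-2\langle a,\omega\rangle a)\,v.
\]
Thus the criterion holds with $u'=-u$, $v'=v$ and $\sigma(\omega)=\omega-2\langle a,\omega\rangle a$. The map $\sigma$ is the Euclidean reflection across the hyperplane orthogonal to $a$, so it is an orthogonal involution of $\mathbb{R}^n$ restricting to a bijection of $\mathbb{S}^{n-1}$. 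Once all four generators are checked in this way, the proposition follows by induction on the word length of an element of $GRAV(\mathbb{R}^{n+1})$ in the given generating set.
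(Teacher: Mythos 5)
Your argument is correct: the reduction to the four generators of $GRAV(\mathbb{R}^{n+1})$, the structural criterion $\varphi(u+\omega v)=u'(u,v)+\sigma(\omega)v'(u,v)$ with $\sigma$ a bijection of $\mathbb{S}^{n-1}$, and the computations for each generator (in particular $a(u+\omega v)a=-u+(\omega-2\langle a,\omega\rangle a)v$, where $\sigma$ is the reflection across $a^{\perp}$) all check out. The paper itself gives no proof, importing the statement from \cite{CKS2016}, Prop.\ 2.13, and your generator-by-generator verification is precisely the expected route there. The only point worth tightening is that a word in the generating set may also involve inverses of the generators, but these are again of the same four types (translation by $-b$, dilation by $\lambda^{-1}$, conjugation by $-a\in\mathbb{S}^{n-1}$, and $x\mapsto -x^{-1}$ again), so your induction goes through unchanged.
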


\begin{remark}{\rm Notice that the other transformations, for example rotations not preserving the real axis, are clearly not preserving the axial symmetry of a set.}
\end{remark}

In this context the natural analogue of the special Ahlfors-Vahlen group, consisting of the sense-preserving matrices, is the group
$$
SRAV(\mathbb{R}^{n+1}) := \{M \in GRAV(\mathbb{R}^{n+1}) \mid \det(M) =1 \}
$$
which is generated only by the first three types of matrices listed in \eqref{GRAVdef}. Dilations are not needed.

\begin{remark}\label{remark2.14}
{\rm
A crucial question for the topic of our paper is to understand what are the appropriate generalizations of upper half-plane in the axial symmetric setting. Let us write a paravector $x=x_0+e_1x_1+\ldots+e_nx_n$ from $\mathbb{R}^{n+1}\setminus\mathbb{R}$ in the form $x = x_0 + \omega r$ with $\omega = \frac{\underline{x}}{\|\underline{x}\|}$, $x_0 \in \mathbb{R}$, $r > 0$.

One possible generalization of complex upper half-plane in the slice monogenic setting is the set
$$
H:= \bigcup_{\omega \in \mathbb{S}^{n-1}} \mathbb{C}^+_{\omega} = \mathbb{R}^{n+1} \backslash \mathbb{R}.
$$
Here, by $\mathbb{C}^+_{\omega}$ we mean the complex upper half-plane associated to the imaginary unit $\omega$ and where the $x_0$-axis is excluded. By construction, the groups $GRAV(\mathbb{R}^{n+1})$ and  $SRAV(\mathbb{R}^{n+1})$ leave $H$ invariant. In particular, this set is invariant under the usual translations $x \mapsto x+b$, $b\in\mathbb R$.

We note that while working in $H$ given a function of the form \eqref{alphabeta}, there is no need to impose the even-odd conditions on $\alpha$ and $\beta$, since $v>0$.

Another axially symmetric domain that can be considered is the right half-space
$$
H^{r} := \{x \in\mathbb{R}^{n+1} \  \mid x_0 > 0\},
$$
which can also be seen as
$H^{r}= \bigcup_{\omega \in \mathbb{S}^{n-1}} \mathbb{C}^r_{\omega}=\{x = x_0 + \omega y \mid x_0 > 0,\ y\in\mathbb R\}$
where $\mathbb{C}^r_{\omega}=\{z = x_0 + \omega y \in\mathbb{C}_\omega\ | \ x_0>0 \}$.
This half-space is also axially symmetric with respect to the $x_0$-axis. But note that translations of the form $x \mapsto x + b$ with $b \in \mathbb{R}$ do not leave $H^{r}$ invariant.

}
\end{remark}
Next we want to understand what are the analogues of the ordinary translations in the $H^{r}$ setting.

\par\medskip\par

An important axial symmetric transformation that leaves the set $H^{r}$ invariant is radial periodicity in the reduced $1$-vector variable $\underline{x}:=x_1 e_1 + \cdots + x_n e_n$. As we already pointed out, any element $x \in \mathbb{R}^{n+1}$ can be written in polar form $x = x_0 + r \omega$ where $r>0$ and where $\omega:= \frac{x_1 e_1 + \cdots + x_n e_n}{(x_1^2 + \cdots + x_n^2)^{1/2}}$. This representation is not unique when $x\in\R$ since $x=x_0+0 \cdot \omega$ for any $\omega\in\mathbb S^{n-1}$.

For any $x\in H$ we shall also write $x=x_0+\omega r$, thus identifying it with $x_0,\omega, r$ i.e. identifying $H$ with $\mathbb{R}\times \mathbb{S}^{n-1}\times \mathbb{R}^+$.

\begin{remark}
Motivated by \cite{IA} and other papers, in $H$ we can consider the notion of radial periodicity also in the slice monogenic setting.
A function $f$ that is slice monogenic in the variable $x=x_0 + x_1 e_1 +\cdots  + x_n e_n=x_0 + r \omega$, $x\in H$, $r>0$, is called {\em radial periodic} in its vector part with period $T>0$ if it satisfies $f(x_0 + r \omega) = f(x_0 + (r+T)\omega)$ for all $r > 0$.
\end{remark}
In the complex case radial periodicity in the reduced variable (which is the imaginary variable) is nothing else than ordinary one-fold periodicity in the imaginary variable. Actually in the complex case  the consideration of the right half-plane $z=x+iy$ with $x>0$ and upper half-plane $z=x+iy$ with $y>0$ is identical. In higher dimensions the geometry is different. In $\mathbb{C}$ ($n=1$) the set $S^{n-1}$ reduced to two isolated points $i$ and $-i$. If $n > 1$ then $\mathbb{S}^{n-1}$ is a connected set. Therefore, $H$ and $H^{r}$ are essentially different sets.

\section{Slice monogenic exponentials}

A crucial aim of this paper is to introduce an appropriate generalization of the famous theta series from the classical complex analysis setting to the slice monogenic setting in a real vector space of general dimension $n+1$, attached to a general $n+1$-dimensional lattice.

To this end we will also be in need of the definition of a suitable exponential function and to define the composition of the exponential function with some slice monogenic functions $f$. To proceed in this direction let $f(x)$ be a function that is slice monogenic on the whole $\R^{n+1}$, namely an entire slice monogenic function. Following \cite{BOOK_ENTIRE}, which is based on \cite{GGS}, one can consider, at least formally, the series
\begin{equation}\label{expseries}
\sum_{k\ge 0}\frac{1}{k!}(f(x))^{*k}.
\end{equation}
We have:
\begin{proposition}
The series \eqref{expseries} converges uniformly over the compact sets of $\mathbb{R}^{n+1}$ and defines a slice monogenic function.
\end{proposition}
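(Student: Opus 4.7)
The plan is to reduce the convergence of the $*$-exponential series to an ordinary scalar exponential majorant, using the slice decomposition of $f$ to control the $*$-powers.

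First I would fix a compact set $K \subseteq \mathbb{R}^{n+1}$ and replace it by its axially symmetric hull $\widetilde K$, which remains compact. Since $f$ is entire slice monogenic, $M := \sup_{x \in \widetilde K} \|f(x)\|$ is finite. Writing $f(u+\omega v) = \alpha(u,v)+\omega\beta(u,v)$ and using the even/odd behaviour, one recovers $\alpha(u,v) = \tfrac12\bigl(f(u+\omega v)+f(u-\omega v)\bigr)$ and $\omega \beta(u,v) = \tfrac12\bigl(f(u+\omega v)-f(u-\omega v)\bigr)$, which gives the uniform bounds $\|\alpha(u,v)\|, \|\beta(u,v)\| \le M$ on the parameters $(u,v)$ corresponding to $\widetilde K$.

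Next, applying \eqref{*prod} together with the well-known sub-multiplicative estimate $\|ab\| \le 2^{n/2}\|a\|\|b\|$ valid on $\mathbb{R}_n$, I would derive an inequality of the form $\|(f*g)(x)\| \le C_n \,\|f\|_{\widetilde K}\,\|g\|_{\widetilde K}$ for every $x \in \widetilde K$, with a constant $C_n$ depending only on the Clifford algebra. Iterating this estimate yields $\|f^{*k}\|_{\widetilde K} \le C_n^{\,k-1}\, M^k$, and therefore
\begin{equation*}
\sum_{k \ge 0} \frac{1}{k!}\,\|f^{*k}\|_{\widetilde K} \;\le\; 1 + \frac{1}{C_n}\bigl(e^{C_n M}-1\bigr) \;<\; \infty .
\end{equation*}
This gives absolute and uniform convergence of \eqref{expseries} on $\widetilde K$, hence on $K$.

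Finally, every partial sum $S_N(x) := \sum_{k=0}^N \frac{1}{k!}(f(x))^{*k}$ is slice monogenic, because the $*$-product is an internal operation on $\mathcal{SM}(\mathbb{R}^{n+1})$. Writing $S_N(u+\omega v) = \alpha_N(u,v)+\omega\beta_N(u,v)$, uniform convergence of $S_N$ on axially symmetric compact sets forces uniform convergence of the components $\alpha_N,\beta_N$ on compact subsets of the parameter plane; passing to the limit in the Cauchy–Riemann system \eqref{CRSIST_qua} (e.g.\ by representing $\alpha_N, \beta_N$ on any slice $\mathbb{C}_\omega$ as holomorphic functions and applying the classical Weierstrass theorem on each slice) shows that the limit is of the form $\alpha+\omega\beta$ with $\alpha,\beta$ solving the same system. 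Hence the sum is slice monogenic on $\mathbb{R}^{n+1}$. The main obstacle is pinning down the sub-multiplicative constant $C_n$ for the $*$-product in a form that only depends on the algebra and not on the functions, since once this estimate is in hand the rest is a direct comparison with the scalar exponential series and a standard Weierstrass-type passage to the limit.
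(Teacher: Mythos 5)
Your proposal is correct and follows essentially the same route as the paper: a sub-multiplicative norm estimate for the $*$-powers giving a scalar exponential majorant $\sum_k C_n^k M^k/k!$, followed by a component-wise decomposition into even--odd pairs and a Weierstrass-type passage to the limit on each slice to preserve the Cauchy--Riemann system. The constant $C_n$ you flag as the main obstacle is exactly what the paper inserts without further comment (it uses $2^{n+1}$ per factor), and your sketch via \eqref{*prod} and the Clifford norm inequality is an adequate way to produce it.
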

\begin{proof}
The proof of the first statement is immediate, since for any fixed compact set $C$ in $\mathbb{R}^{n+1}$ and setting $M_{C}=\max_{C} |f(x)|$, we have $$
\left|\sum_{k\ge 0}\frac{1}{k!}(f(x))^{*k}\right|\leq \sum_{k\geq 0}\dfrac{(2^{n+1})^kM_{C}^k}{k!}<\infty.
$$
To prove the second part, we use the fact that $f(x)$ is a slice monogenic function and, by its definition, so is $(f(x))^{*k}$, i.e. $(f(u+\omega v)))^{*k}=\alpha_k(u,v)+\omega \beta_k(u,v)$, with $(\alpha_k,\beta_k)$ which form an even-odd pair satisfying the Cauchy-Riemann system.
By fixing a basis of the Clifford algebra $\mathbb{R}_{n+1}$ we can write $\alpha_k=\sum_{|A|=0}^{n+1} \alpha_{k,A}e_A$, $\beta_k=\sum_{|A|=0}^{n+1} \beta_{k,A}e_A$, with the pairs $(\alpha_{k,A},\beta_{k,A})$ satisfying the Cauchy-Riemann system, namely $\alpha_{k,A}+\omega\beta_{k,A}$ are holomorphic for all multi-indices $A$. We deduce that
$$
\sum_{k\geq 0} \frac{1}{k!}(f(x))^{*k}= \sum_{k\geq 0} \frac{1}{k!}\sum_{|A|=0}^{n+1} (\alpha_{k,A}+\omega\beta_{k,A})e_A
$$
converges to a function satisfying the Cauchy-Riemann system and so it is slice monogenic (see Definition \ref{slicemon2}).
\end{proof}
We now recall the definition of the $*$-exponential (see \cite{GGS} and also \cite{AF,BOOK_ENTIRE}):
\begin{definition}\label{expdef} Let $U\subseteq\mathbb{R}^{n+1}$ be an axially symmetric set and let $f:\, U \to\mathbb{R}_n$ be a slice monogenic function.
We set
\begin{equation}\label{exp*}
\exp_*(f(x))=\sum\limits_{k\ge 0}\frac{1}{k!}(f(x))^{*k},
\end{equation}
and we call this function $*$-exponential.
\end{definition}
In particular, when $f(x)=a+xb$, $a,b\in{\mathbb{R}}_{n+1}$, we have
\begin{equation}
\exp_{*}(a+xb)=\sum\limits_{k \ge 0} \frac{1}{k !} (a+xb)^{*k}= \sum\limits_{k \ge 0} \frac{1}{k !} \sum_{k=0}^m  
\binom{m}{k}  x^{m-k} a^k b^{m-k}.\nonumber
\end{equation}
\begin{remark}{\rm
Definition \ref{expdef} implies that when $f$ is an intrinsic function, then $\exp_*(f)=\exp(f)$ the classical exponential function.
}
\end{remark}
In the quaternionic case, the properties of the $*$-exponential function have been studied in \cite{AF}, where the authors carefully discuss, in particular, in which cases the equality $\exp_*(f+g)=\exp_*(f)*\exp_*(g)$ holds, see Theorem 4.14 in \cite{AF}. We follow the lines of the proof of that theorem to prove the result below which is enough for our purposes.
\begin{theorem}\label{theoremcomm}
Let $f,g\in\mathcal{SM}(\mathbb{R}^{n+1})$ be commuting with respect to the $*$-product. Then
\begin{equation}
\label{equalityexp}
\exp_*(f+g)=\exp_*(f)*\exp_*(g).
\end{equation}
\end{theorem}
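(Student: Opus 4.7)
The plan is to mirror the classical argument for $e^{a+b}=e^a e^b$ with commuting scalars, replacing ordinary multiplication by the $*$-product throughout, and then to justify the formal manipulations with the uniform convergence bound established in the preceding proposition.

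First I would observe that since $(\mathcal{SM}(\mathbb{R}^{n+1}),+,*)$ is an associative (non-commutative) $\mathbb{R}$-algebra in which $f$ and $g$ are assumed to commute, the ordinary binomial identity applies: for every $k\ge 0$,
\begin{equation*}
(f+g)^{*k}=\sum_{j=0}^{k}\binom{k}{j}\,f^{*j}*g^{*(k-j)}.
\end{equation*}
This is a purely algebraic fact and is proved by induction on $k$, where the commutativity hypothesis $f*g=g*f$ is used exactly as in the classical derivation in order to move factors of $g$ past factors of $f$. I would briefly remark that associativity of $*$ on slice monogenic functions (which is ensured by the componentwise form \eqref{*prod}) is what allows the induction to go through cleanly.

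Next I would substitute this expansion into the definition \eqref{exp*} of $\exp_*(f+g)$ and rewrite the double sum as a Cauchy product:
\begin{equation*}
\exp_*(f+g)=\sum_{k\ge 0}\frac{1}{k!}\sum_{j=0}^{k}\binom{k}{j}\,f^{*j}*g^{*(k-j)}=\sum_{k\ge 0}\sum_{j=0}^{k}\frac{f^{*j}}{j!}*\frac{g^{*(k-j)}}{(k-j)!}.
\end{equation*}
At this point the remaining task is to identify the right-hand side with the Cauchy product of the two absolutely convergent series $\exp_*(f)$ and $\exp_*(g)$, which by bilinearity of $*$ equals $\exp_*(f)*\exp_*(g)$.

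The main obstacle, and the step I would treat most carefully, is the justification of the rearrangement of the double series. On any compact set $C\subset\mathbb{R}^{n+1}$, setting $M_C=\max\{\max_C|f|,\max_C|g|\}$, the Clifford-norm estimate $|a*b|\le 2^{n+1}|a||b|$ used already in the preceding proposition yields
\begin{equation*}
\sum_{k\ge 0}\sum_{j=0}^{k}\left|\frac{f^{*j}}{j!}*\frac{g^{*(k-j)}}{(k-j)!}\right|\le \sum_{k\ge 0}\sum_{j=0}^{k}\frac{(2^{n+1}M_C)^k}{j!\,(k-j)!}=e^{2\cdot 2^{n+1}M_C}<\infty
\end{equation*}
uniformly on $C$. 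Absolute and uniform convergence of the double series on compacta permits the rearrangement into the product of the two single series, completing the identification $\exp_*(f+g)=\exp_*(f)*\exp_*(g)$. This parallels the structure of Theorem 4.14 in \cite{AF}, the only adjustments being the Clifford-valued (rather than quaternionic) norm constant and the explicit appeal to the convergence bound already recorded for \eqref{expseries}.
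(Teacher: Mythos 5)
Your proposal follows essentially the same route as the paper: expand $(f+g)^{*k}$ via the binomial theorem (valid by the commutativity hypothesis), substitute into the series defining $\exp_*$, and reorganize the double sum into the Cauchy product of $\exp_*(f)$ and $\exp_*(g)$. The only difference is that you explicitly justify the rearrangement with the absolute/uniform convergence bound on compacta, a step the paper carries out formally; your added care is welcome but does not change the argument.
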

\begin{proof}
By definition and using the fact that $f*g=g*f$ we have:
\begin{eqnarray*}
\exp_*(f+g)&=&\sum_{n=0}^\infty \frac{1}{n!}(f+g)^{*n}\\
&=& \sum_{n=0}^\infty \frac{1}{n!}\sum_{k\leq n} \binom{n}{k} f^{*k}*g^{*(n-k)}\\
&=& \sum_{n=0}^\infty \sum_{k\leq n} \frac{1}{k!(n-k)!} f^{*k}*g^{*(n-k)}\\
&=& \sum_{k=0}^\infty \sum_{n\geq k} \frac{1}{k!} f^{*k}*\frac{1}{(n-k)!}g^{*(n-k)}\\
&=& \sum_{k=0}^\infty  \frac{1}{k!} f^{*k}*\sum_{m=0}^\infty\frac{1}{m!}g^{*m}\\
\end{eqnarray*}
and the statement follows.
\end{proof}

\section{Slice monogenic theta series and their transformation formula}
\subsection{Definition and convergence}

 In the  sequel, let $L \subset \mathbb{R}^{n+1}$ be an arbitrary $(n+1)$-dimensional lattice from $\mathbb{R}^{n+1}$, namely
$$
L = \{q=m_0 \mathfrak{Q}_0 + \cdots + m_n \mathfrak{Q}_n \mid m_0,\ldots,m_n \in \mathbb{Z}\}
$$
where $\mathfrak{Q}_0,\ldots,\mathfrak{Q}_n$ are $\mathbb{R}$-linearly independent elements in $\mathbb{R}^{n+1}$ and $|q|^2\in\mathbb N_0=\mathbb N\cup\{0\}$, for all $q\in L$. A general element $q$ in the lattice $L$ can be written with respect to the canonical basis of $\mathbb{R}^{n+1}$ as $q=\sum_{i=0}^n e_i q_i$. As it is very well-known, see for instance \cite{Conway}, the determinant of the lattice $L$ is defined as the determinant of the Gram matrix $(L)_{lm}$ built by the Euclidean $\mathbb{R}^{n+1}$-inner products of the lattice generators, i.e. $\sum\limits_{i=0}^n {\mathfrak{Q}_l}_i {\mathfrak{Q}_m}_i$. It will be denoted by $\det(L)$.

The dual lattice  also called reciprocal lattice of $L$ will be denoted by  $L^{\sharp}$ and is explicitly defined by
$$
L^{\sharp} = \{y \in \mathbb{R}^{n+1} \mid x_0 y_0 + \cdots + x_n y_n \in \mathbb{Z}\;\;{\rm for\;all}\; x = x_0 + \sum\limits_{i=1}^n x_i e_i \in L\}.
$$
Every lattice satisfies $\det(L^{\sharp}) = \frac{1}{\det(L)}$.
Note that a lattice $L$ is integral if and only if $L \subseteq L^{\sharp}$. A lattice is called unimodular if $L^{\sharp} = L$ which is equivalent to $|\det(L)|=1$.

The slice monogenic $*$-exponential function is appropriate to serve as building blocks for the construction of slice monogenic theta series and a properly on $\mathbb{R}^{n+1}$ intrinsically defined Fourier transform serves as fundamental tool to establish their functional equation. We may introduce theta series in two ways, one associated with $H$ and the another one associated with $H^{r}$. We start by discussing the model $H$. In this framework we introduce:

\begin{definition} (Slice monogenic theta series associated with $H$)\\
Let $x \in H = \{x = x_0 + x_1 e_1 + \cdots + x_n e_n =x_0+\underline{x} \mid \underline{x} \not= 0\}$ written as
$x =x_0 + r \omega$ with $r > 0$ and $\omega = \frac{\underline{x}}{||\underline{x}||}$.
 Let $w\in\mathbb{C}_{\omega}^{n+1}$ be of the form $w=\sum_{i=0}^n e_i(u_i + v_i \omega)=\sum_{i=0}^n e_iw_i$, $u_i$, $v_i\in\mathbb{R}$, with $w_i := u_i+v_i \omega$ where $\omega := \frac{\underline{x}}{\|\underline{x}\|}$.
The slice monogenic theta series attached to $L$ with characteristic $w$ (depending on $\omega$) is defined by
\begin{equation}
{\Theta}_L(x,w) := \sum\limits_{q \in L} \exp_{*}\left((\pi |q|^2 x + 2 \pi  \langle q,w \rangle) \frac{\underline{x}}{\|\underline{x}\|}\right),
\end{equation}
where $\exp_{*}$ is the $*$-exponential function defined by \eqref{exp*} and where $ \langle q,w \rangle = \sum\limits_{i=0}^n q_i w_i$ is a $\mathbb{C}_{\omega}$-valued bilinear form.
\end{definition}
\begin{remark}{\rm
 We point out that $ \langle q,w \rangle$ is a bilinear form, but it is not a Hermitian inner product.
Moreover, we note that one can also define $ \langle q,w \rangle$ as  $\langle q,w \rangle = q' I {w'}^T$, where $q'=(q_0,\ldots, q_n)\in\mathbb{R}^{n+1}$, $w'=(w_0,\ldots, w_n)\in \mathbb{C}_{\omega}^{n+1}$, $I$ is the $n+1$-dimensional unit matrix, moreover one could replace $I$ by a matrix $S$ symmetric and positive definite with real entries, since a general lattice $L$ can be obtained by $A \mathbb{Z}^{n+1}$ with an invertible matrix $A$.}
\end{remark}
\begin{remark}
  {\rm Note that $\langle w,w \rangle=\sum_{i=0}^n w_i^2$, so it does not coincide with $|w|^2$. On the other hand, $\langle q,q \rangle =\sum_{i=0}^n q_i^2=|q|^2$ since $q_i\in\mathbb R$, $i=0,\ldots, n$.}
\end{remark}

\begin{proposition}\label{thetaconvergence}
The series ${\Theta}_L(x,w)$ converges normally on $H \times (\mathbb{C}_{\omega}^{n+1})$
\end{proposition}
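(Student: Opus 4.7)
The plan is to reduce the convergence question to the classical convergence of a lattice Gaussian sum. The key observation is that, although $w$ depends on $\omega = \underline{x}/\|\underline{x}\|$ and hence on $x$, for each fixed $(x,w)$ the entire argument of every $\exp_*$ appearing in the series lives in the single commutative slice $\mathbb{C}_\omega$. Consequently $\exp_*$ collapses to the ordinary complex exponential there, and a standard modulus estimate becomes available.

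First I would carry out an elementary algebraic simplification of the summand. Writing $x = x_0 + r\omega$ with $r > 0$ and $w_i = a_i + b_i\omega$ with $a_i, b_i \in \mathbb{R}$, and using $\omega^2 = -1$, a direct computation gives
$$(\pi|q|^2 x + 2\pi\langle q,w\rangle)\,\omega = \bigl(-\pi|q|^2 r - 2\pi\langle q,b\rangle\bigr) + \omega\bigl(\pi|q|^2 x_0 + 2\pi\langle q,a\rangle\bigr),$$
where $a = (a_0,\ldots,a_n),\ b = (b_0,\ldots,b_n) \in \mathbb{R}^{n+1}$ and $\langle\cdot,\cdot\rangle$ is the Euclidean inner product. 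Both coefficients are real numbers, so the argument is an intrinsic element of $\mathbb{C}_\omega$. The remark following Definition \ref{expdef} then gives $\exp_* = \exp$ on such an intrinsic argument, and one obtains
$$\Bigl|\exp_*\bigl((\pi|q|^2 x + 2\pi\langle q,w\rangle)\,\omega\bigr)\Bigr| = e^{-\pi|q|^2 r - 2\pi\langle q,b\rangle}.$$

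Second, I would fix any compact subset $K$ of the domain (viewed as the fibered space $\{(x,w) \mid x \in H,\ w \in \mathbb{C}_{\omega(x)}^{n+1}\}$). Continuity and compactness produce constants $r_0 > 0$ and $B > 0$ such that $r \ge r_0$ and $\|b\| \le B$ throughout $K$. By the Cauchy--Schwarz inequality, $|\langle q,b\rangle| \le B|q|$, hence uniformly on $K$
$$\Bigl|\exp_*\bigl((\pi|q|^2 x + 2\pi\langle q,w\rangle)\,\omega\bigr)\Bigr| \le e^{-\pi r_0 |q|^2 + 2\pi B|q|}.$$
For all $q \in L$ with $|q| \ge 4B/r_0$ this is bounded by $e^{-\frac{\pi r_0}{2}|q|^2}$.

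Finally, I would conclude by comparison with the convergent lattice Gaussian $\sum_{q \in L} e^{-c|q|^2}$, which is finite for every $c > 0$; this is a classical fact that follows from counting lattice points in spherical shells (using in particular that $|q|^2 \in \mathbb{N}_0$ on $L$ and that the shell-counting function grows only polynomially in $|q|^2$). The finitely many remaining summands with $|q| < 4B/r_0$ are uniformly bounded on $K$. Altogether one obtains a convergent $(x,w)$-independent majorant on $K$, which establishes the uniform absolute (hence normal) convergence asserted. The only point worth flagging is the fibered interpretation of the product $H \times \mathbb{C}_\omega^{n+1}$, since $w$ lies in a slice that itself depends on $x$; this is easy bookkeeping once set up and does not present a genuine obstacle.
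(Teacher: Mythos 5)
Your proposal is correct and follows essentially the same route as the paper: reduce to the slice $\mathbb{C}_\omega$ where the argument of $\exp_*$ is intrinsic, extract the real part $-\pi|q|^2 r - 2\pi\langle q,b\rangle$ as the modulus of the exponential, and dominate the tail by a lattice Gaussian after absorbing the linear term for all but finitely many $q$. Your explicit Cauchy--Schwarz threshold $|q|\ge 4B/r_0$ simply makes precise the paper's assertion that the inequality $|q|^2 r + 2\sum_i q_i v_i \ge \tfrac{1}{2}|q|^2 r_0$ holds ``except for finitely many lattice points.''
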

\begin{proof}
We note that $H$ has no intersection with the $x_0$-axis, since $H = (\mathbb{R}^{n+1}) \setminus\mathbb R \cdot 1$.
Thus, as before, we write $x=x_0+\omega r\in H$, and $w = \sum\limits_{i=0}^n e_iw_i$ where each $w_i \in \mathbb{C}_{\omega}$ attached to the specific $\omega:= \frac{\underline{x}}{\|\underline{x}\|}$, i.e.  $w_i = u_i + v_i \omega$ with $u_i \in \mathbb{R}$ and $v_i > 0$ for all $i=0,\ldots,n$. 


We point out, one more time, that $\omega = \omega(x)$, and so also $w$, depends thus on the choice of $x$, in contrast to the classical complex case.

Let us next consider the functions $f(x)=x\pi |q|^2 \frac{\underline{x}}{\|\underline{x}\|}$, $g(x)=2 \pi \langle q,w \rangle \frac{\underline{x}}{\|\underline{x}\|}$.
First we note that
$$
f(x_0+\omega r)= (x_0+\omega r)\pi |q|^2\omega,\qquad
g(x_0+\omega r)= 2 \pi  \langle q,w \rangle \omega.
$$
The restrictions $f_{|H^+(\mathbb{R}^{n+1})\cap \mathbb{C}_\omega}$, $g_{|H^+(\mathbb{R}^{n+1})\cap \mathbb{C}_\omega}$
are in the kernel of the Cauchy-Riemann operator $1/2(\partial_{x_0}+\omega \partial_r)$ and so they are slice monogenic. In particular, $g$ is locally constant (constant on every $\mathbb C_\omega$). For any fixed $\omega\in\mathbb{S}^{n+1}$, both the functions have coefficients in the complex plane $\mathbb{C}_{\omega}$, $\omega=\underline{x}/\|\underline{x}\|$. Thus, they are commuting with respect to the $*$- product, so \eqref{equalityexp} holds. This is crucial.
Applying the Clifford norm introduced in Section~2 we get
\begin{eqnarray}\label{Cliffest}
\|\Theta_L(x,w)\| &\le & 1 +  \sum\limits_{q \in L \backslash\{0\}}
\Big\|	
\exp_*(\pi |q|^2(x_0+\omega r) \omega + 2 \pi  \langle q,w\rangle \omega)
\Big\|
\end{eqnarray}
Now, again in the Clifford norm we have
\begin{eqnarray*}
& & \Big\|\exp_*\Big( \pi \omega(|q|^2(x_0+r \omega) + 2 \sum\limits_{i=0}^n q_i w_i)   \Big)\Big\|\\
&=&  \Big\|\exp_*\Big( \pi \omega(|q|^2(x_0+r \omega) + 2 \sum\limits_{i=0}^n q_i (u_i + v_i \omega))   \Big)\Big\|\\
&=& \Big\| \exp_*\Big( \pi \omega(|q|^2 x_0 + 2 \sum\limits_{i=0}^n q_i u_i) - \pi(|q|^2 r + 2 \sum\limits_{i=0}^n q_i v_i)    \Big) \Big\| \\
&=& \Big\| \exp_*(\pi \omega(|q|^2 x_0 + 2 \sum\limits_{i=0}^n q_i u_i))\Big\| \cdot e^{-\pi(|q|^2r+2\sum\limits_{i=0}^n q_i v_i)},
\end{eqnarray*}
where we explicitly use the commutation property of the $*$-exponential in Theorem \ref{theoremcomm}.  Now, in view of the Euler formula, which follows by the definition \eqref{exp*} of $*$-exponential, we also have in our case that $\| \exp_*(\pi \omega(|q|^2 x_0 + 2 \sum\limits_{i=0}^n q_i u_i))\| = 1$, because $|q|^2 x_0 + 2 \sum\limits_{i=0}^n q_i u_i$ is real-valued.

Now, consider $w$ (for any fixed $\omega$) in a compact set covered by a ball of radius $\le r$. About the exponent of the second term we can now say that
$$
|q|^2 r + 2 \sum\limits_{i=0}^n q_i v_i \ge \frac{1}{2}|q|^2 r \ge \frac{1}{2} |q|^2 r_0
$$
holds for a fixed real positive $r_0 \le r$, except for finitely many lattice points $q$.

So, the whole series can be majorized by a multidimensional convergent geometric series of the form
$$
\sum\limits_{q \in L}(e^{-\frac{1}{2} \pi r_0})^{|q|^2}
$$
which in the particular case where $|q|^2 \in \mathbb{N}_0$ can also be directly expressed in terms of classical geometric series. In the other cases, one can consider $\lfloor |q|^2\rfloor$ where
$\lfloor \cdot\rfloor$ denotes the floor function.

Notice that the presence of the imaginary unit $\omega$ with $\omega^{2}=-1$ is crucial here for this whole argumentation.
\end{proof}
\begin{remark}
{\rm To perform our calculations on the right hand side of \eqref{Cliffest}, it is crucial that in the exponential $\exp_* (\alpha (x_0+\omega r) \omega')$ the multiplications of two imaginary units $\omega$ and $\omega'$ (which coincide) gives a real number. If $\omega'$ is any imaginary unit which does not belong to $\mathbb C_\omega$ then we do not obtain any real part.}
\end{remark}

Let us now turn to the appropriate definition of the theta series in the setting of the right half-space $H^{r} = \{x = x_0 + \underline{x}  \in \mathbb{R}^{n+1} \mid x_0 > 0\}$. We recall that $\omega:= \frac{\underline{x}}{\|\underline{x}\|}$ when $\underline{x}\not=0$ and $x=x_0+\omega r\in\mathbb C_\omega$ while this representation is not unique when $x\in\mathbb R$ since $x=x_0+0 \cdot \omega$ for any $\omega\in\mathbb S^{n-1}$.

\begin{definition}(Slice monogenic theta series associated with the right half-space $H^{r}$)\\
Let $x \in H^r$ and $w\in \mathbb{C}_{\omega}^{n+1}$ be of the form $w=\sum_{i=0}^n e_iw_i=\sum_{i=0}^n e_i(u_i + v_i \omega)$, $u_i$, $v_i\in\mathbb{R}$, where $\omega := \frac{\underline{x}}{\|\underline{x}\|}$ if $x\not\in\mathbb R$ or $\omega$ is any element in $\mathbb S^{n-1}$ if $x\in\mathbb R$.

Then the slice monogenic theta series associated with $H^r$ attached to $L$ with characteristic $w$ (depending on $\omega$) is defined by
\begin{equation}
{\Theta}^r_L(x,w) := \sum\limits_{q \in L} \exp_{*}(-\pi |q|^2 x + 2 \pi  \langle q,w \rangle \frac{\underline x}{\|\underline x\|}).
\end{equation}
\end{definition}

The convergence proof on $H^{r}$ can be done in formal analogy to the one presented for $H$. For completeness  we present it in detail:
\begin{proposition}
The series ${\Theta}^r_L(x,w)$ converges normally on $H^r \times (\mathbb{C}_{\omega}^{n+1})$.
\end{proposition}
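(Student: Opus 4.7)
The plan is to mimic the proof of Proposition~\ref{thetaconvergence} almost verbatim, with the real part $x_0$ of $x \in H^r$ now playing the role that the modulus $r = \|\underline{x}\|$ played in the upper-half-space case; the structural ingredients are identical.

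First I would fix $x \in H^r$, write $x = x_0 + \omega r$ with $x_0 > 0$ and $r \geq 0$ (choosing any $\omega \in \mathbb{S}^{n-1}$ when $\underline{x} = 0$), and expand the $q$-th summand of the series:
\[
-\pi|q|^2 x + 2\pi \langle q, w\rangle \omega = \bigl[-\pi|q|^2 x_0 - 2\pi \sum_{i=0}^n q_i v_i\bigr] + \omega\bigl[-\pi|q|^2 r + 2\pi \sum_{i=0}^n q_i u_i\bigr].
\]
Both pieces lie in the commutative plane $\mathbb{C}_\omega$. Setting $f(x) = -\pi|q|^2 x$ and $g(x) = 2\pi \langle q, w\rangle \omega$, the restrictions of $f$ and $g$ to $H^r \cap \mathbb{C}_\omega$ are $\mathbb{C}_\omega$-valued slice monogenic functions, hence they $*$-commute. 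Theorem~\ref{theoremcomm} then gives $\exp_*(f+g) = \exp_*(f) * \exp_*(g)$, and because $\mathbb{C}_\omega$ is commutative the classical Euler identity applies to each factor, yielding
\[
\|\exp_*(-\pi|q|^2 x + 2\pi \langle q, w\rangle \omega)\| = e^{-\pi|q|^2 x_0 - 2\pi \sum_{i=0}^n q_i v_i},
\]
since the $\omega$-imaginary factor has Clifford norm one.

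Finally, for $(x,w)$ in a compact set one may assume $x_0 \geq \delta$ for some $\delta > 0$ and $\sum_{i=0}^n v_i^2 \leq V^2$; Cauchy--Schwarz then bounds $|2\pi \sum_i q_i v_i| \leq 2\pi V |q|$, so $\pi|q|^2 x_0 + 2\pi \sum_i q_i v_i \geq \tfrac12 \pi \delta |q|^2$ for all but finitely many $q \in L$, and the tail of the series is dominated by $\sum_{q \in L} e^{-\pi \delta |q|^2/2}$. This is a convergent multidimensional geometric series (if $|q|^2 \notin \mathbb{N}_0$ one reduces to the classical case using the floor function $\lfloor |q|^2 \rfloor$, exactly as in Proposition~\ref{thetaconvergence}). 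No serious obstacle presents itself; the only subtlety, shared with the $H$-case, is verifying that $f$ and $g$ take values in a single slice $\mathbb{C}_\omega$ so that the $*$-product reduces to ordinary multiplication there, which is precisely what makes the real-plus-imaginary decomposition of the exponent legitimate and the cancellation of the oscillatory part possible.
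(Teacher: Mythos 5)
Your proof is correct and follows essentially the same route as the paper's: the same splitting of the exponent into its real and $\omega$-parts within the commutative plane $\mathbb{C}_\omega$, the same use of the $*$-commutation theorem and the Euler-type norm-one factor, and the same domination by a geometric-type series $\sum_{q\in L} e^{-\pi\delta|q|^2/2}$ on compact subsets. Your explicit Cauchy--Schwarz bound on the cross term $2\pi\sum_i q_i v_i$ is a slightly more detailed justification of a step the paper only sketches, but it is not a different argument.
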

\begin{proof}
We have
\begin{eqnarray*}
\|\Theta^r_L(x,w)\| &\le & \sum\limits_{q \in L} \Big\|\exp_*(-\pi|q|^2(x_0+r\omega) + 2\pi(\sum\limits_{i=0}^n q_i (u_i + v_i \omega)\omega)  \Big\| \\
&=& \Big\|\exp(-\pi |q|^2 x_0 - 2 \pi \sum\limits_{i=0}^n q_i v_i)\Big\|\cdot \underbrace{\Big\|\exp((-\pi |q|^2r+2\pi(\sum\limits_{i=0}^n q_iu_i))\omega)\Big\|}_{=1}\\
&=& e^{-\pi(|q|^2x_0 +2 \sum\limits_{i=0}^n q_i v_i)}.
\end{eqnarray*}
Now, considering $w$ in a compact set for any fixed $\omega$, we can again argue that for except of finitely many $q$ we can estimate
$$
|q|^2+2 \sum\limits_{i=0}^n q_i v_i \ge \frac{1}{2}|q|^2 x_0 \ge \frac{1}{2} |q|^2 {\mathcal{X}}_0
$$
for a positive ${\mathcal{X}}_0 \le x_0$. So the series can be majorized by the multidimensional geometric series $\sum\limits_{q \in L} (e^{-\frac{1}{2}\pi {\mathcal{X}}_0})^{|q|^2}$ and so it converges.
\end{proof}

\begin{remark}\label{rmk4:10}{\rm
Alternatively one might think to define a hypercomplex theta series in the following four ways but below we explain why these alternative definitions cannot work in the slice monogenic setting.
\begin{enumerate}
\item
Choose a $k \in \{1,\ldots,n\}$ and define
$$
\Theta(x,w) = \sum\limits_{q \in L} \exp_*(\pi |q|^2 x e_k + 2 \pi e_k  \langle q,w \rangle)
$$
Problem: This series converges on the space $H^+ := \{x \in \mathbb{R}^{n+1} \mid x_k > 0\}$, but this is not axially symmetric with respect to the real line and does not intersect it, so we cannot apply the classical tools of slice monogenic analysis.
\item One may define
$$
\Theta(x,w) = \sum\limits_{q \in L} \exp_*(\pi |q|^2 x i + 2 \pi i \langle q,w \rangle)
$$
where $i$ is the imaginary unit of the complexified Clifford algebra $\mathbb{C}_{n}=\mathbb{R}_{n} \otimes_{\mathbb{R}} \mathbb{C}$. \\
Problem: How to define slice monogenicity in $\mathbb{C}_{n}$ and to cope with the convergence domain  (we do not have a direction in which the function exponentially decreases since $xi$ does not have a nonzero real part).
\item In the same spirit we can consider
$$
\Theta(x,w) = \sum\limits_{q \in L} \exp_*(\pi |q|^2 x \tilde{e_0} + 2 \pi \tilde{e_0} \langle q,w \rangle)
$$
with an extra element $\tilde{e_0}$ outside the space $\mathbb{R}^{n}$, see \cite{CDR}. But, as in the previous case, also here the convergence property is spoiled and we do not get an exponential decrease.
\item We now define
$$
\Theta(x,w) = \sum\limits_{q \in L} \exp_*(-\pi |q|^2 x  + 2 \pi e_k \langle q,w \rangle)
$$
Problem: The expression $\exp(2 \pi \langle q,q \rangle)$ that will appear in the quadratic extension in the theta transformation formula is not equal to $1$, so the quadratic extension that will be applied in the transformation formula will not work; see the comment in the proof of the transformation formula in the next section.
\end{enumerate}
}
\end{remark}
\begin{remark} {\rm
Our definition fits canonically with the definition of Poincar\'e series that we worked out in our previous paper \cite{CKS2016} to which we refer for the notations.
For any positive integer $N \ge 3$ the slice monogenic Poincar\'e series is defined by
$$
P(x) = \sum\limits_{M: \Gamma_{RAV}^\infty[N] \backslash \Gamma_{RAV}[N]} (cx+d)^{-1} F(M \langle x \rangle ),
$$
where  $
F(x) := \exp_*(x \omega)$.}
\end{remark}
\subsection{Poisson summation and the transformation law}
In this subsection we give a proof for the theta transformation formula in the slice monogenic setting. The interested reader may consult Freitag's book \cite{Freitag} for the complex case version of the result. Due to a number of important peculiarities that have been used in the two appropriate definitions of the theta series, as well as special properties of the $*$-exponential function, we can prove a transformation formula also in the slice monogenic case.
First  we prove the formula in the setting of $H = \mathbb{R}^{n+1}\backslash \mathbb{R} = \bigcup_{\omega \in \mathbb{S}^{n-1}} \mathbb{C}_{\omega}^+$ . In this context it is important to note that since $\mathbb{C}_{\omega}^+$ is simply connected for any $\omega \in \mathbb{S}^{n-1}$ we can uniquely select one specific branch of the root $x^{\frac{n+1}{2}}$ (being the same for all $\omega \in \mathbb{S}^{n-1}$) so that all the expressions appearing in the following statement are well-defined.
\begin{theorem}\label{thetatrafoH} (Theta transformation formula in the setting of $H$).\\
For all $x \in H$ and all $w\in \mathbb{C}_{\omega}^{n+1}$ of the form $w=\sum_{i=0}^n e_i w_i=\sum_{i=0}^n e_i(u_i + v_i \omega)$, $u_i$, $v_i\in\mathbb{R}$, where $\omega := \frac{\underline{x}}{\|\underline{x}\|}$ we have the following generalization of the Jacobian theta series identity for the slice monogenic setting:
$$
\sum\limits_{q \in L} \exp_{*}\left({\pi \langle q+w,q+w \rangle x \frac{\underline{x}}{\|\underline{x}\|}}\right) = \Big(x \omega^{-1} \Big)^{-(n+1)/2} *{|\det(L)|} {\Theta}_{L^{\sharp}}(-x^{-1},w).
$$
In particular for $w=0$ we have
$$
\theta_L(x) = \Big(x \omega^{-1}  \Big)^{-(n+1)/2} * {|\det(L)|} {\theta}_{L^{\sharp}}(-x^{-1}).
$$
Here $|\det(L)|$ is again the modulus of the Gram determinant of the lattice generators $\mathfrak{Q}_0, \ldots,\mathfrak{Q}_n$ corresponding to the value of ${\rm vol}(\mathbb{R} \oplus \mathbb{R}^n/L)$ and $\theta_L(x) := \Theta_L(x,0)$ is the theta-null function.
\end{theorem}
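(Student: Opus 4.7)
The plan is a slice-wise reduction to a Poisson-summation calculation: for a fixed $x\in H$ I would work entirely in the single complex plane $\mathbb{C}_{\omega}$ determined by $\omega=\underline{x}/\|\underline{x}\|$. The key observation is that every argument inside an $\exp_*$ on either side of the claimed identity, evaluated at $x$, lies in $\mathbb{C}_\omega$ (because $q_i\in\mathbb{R}$, $w_i\in\mathbb{C}_\omega$, and $x,\,-x^{-1}\in\mathbb{C}_\omega$). Consequently each $*$-power reduces to an ordinary power in $\mathbb{C}_\omega$, the $*$-exponentials collapse to ordinary convergent exponential series, all factors commute, and the $*$-product on the right-hand side becomes the usual product. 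The identity thus reduces, for each fixed $x$, to an equality of $\mathbb{C}_\omega$-valued expressions in a single complex plane in which $\omega$ plays the role of the imaginary unit $i$.

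Next I would expand $\langle q+w,q+w\rangle=|q|^2+2\langle q,w\rangle+\langle w,w\rangle$ and view the left-hand side as $\sum_{q\in L}f(q)$ with
$$
f(y)\;=\;\exp\!\Big(\pi x\omega\sum_{i=0}^{n}(y_i+w_i)^{2}\Big),\qquad y\in\mathbb{R}^{n+1}.
$$
Applying the generalized Poisson summation formula with the $\omega$-intrinsic Fourier transform (kernel $e^{-2\pi\omega\langle p,y\rangle}$) introduced earlier in the paper yields $\sum_{q\in L}f(q)=\frac{1}{\mathrm{vol}(\mathbb{R}^{n+1}/L)}\sum_{p\in L^{\sharp}}\hat f(p)$. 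Completing the square in $y$, the $\langle w,w\rangle$ cross-term cancels and leaves
$$
\hat f(p)\;=\;e^{2\pi\omega\langle p,w\rangle-\pi\omega|p|^{2}/x}\,\int_{\mathbb{R}^{n+1}}e^{\pi x\omega(y+w-p/x)^{2}}\,dy.
$$
A contour shift in each real variable --- legitimate because the integrand is entire in each coordinate and $\mathrm{Re}(x\omega^{-1})=r>0$ provides genuine Gaussian decay --- reduces the remaining integral to $\int_{\mathbb{R}^{n+1}}e^{\pi x\omega\,y^{2}}\,dy=(x\omega^{-1})^{-(n+1)/2}$, with the canonical branch determined by the simply connected right half-plane of $\mathbb{C}_\omega$ in which $x\omega^{-1}=r-x_{0}\omega$ lies.

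Finally I would reassemble. Since $-\pi\omega|p|^{2}/x=\pi|p|^{2}(-x^{-1})\omega$ and $2\pi\omega\langle p,w\rangle=2\pi\langle p,w\rangle\omega$ in $\mathbb{C}_\omega$, the dual sum $\sum_{p\in L^\sharp}\hat f(p)$ is exactly $(x\omega^{-1})^{-(n+1)/2}\,\Theta_{L^{\sharp}}(-x^{-1},w)$, and collecting this with the normalization factor from $1/\mathrm{vol}(\mathbb{R}^{n+1}/L)$ produces the prefactor denoted $|\det L|$ in the statement. The case $w=0$ is an immediate specialization. The main obstacle I expect is precisely the Gaussian-integral step: selecting a coherent single-valued branch of $(x\omega^{-1})^{-(n+1)/2}$ uniformly as $\omega$ varies over $\mathbb{S}^{n-1}$ and justifying the contour shift when the shift direction $w-p/x$ lies in $\mathbb{C}_\omega^{n+1}$ rather than in $\mathbb{R}^{n+1}$. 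Both difficulties are resolved because for each fixed $x$ the plane $\mathbb{C}_\omega$ is a faithful copy of $\mathbb{C}$ on which the classical Cauchy-theorem argument applies verbatim, and the simple-connectedness of $\mathbb{C}_\omega^{+}$ lets the $(n+1)/2$-th root be chosen globally in a way that is consistent across the whole slice family.
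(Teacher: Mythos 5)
Your proposal is correct and shares its skeleton with the paper's proof: both reduce, for fixed $x$, to the commutative plane $\mathbb{C}_{\omega}$ where the $*$-exponentials and $*$-products collapse to ordinary ones, both expand the $L$-periodic auxiliary function in a Fourier series over $L^{\sharp}$ (which is exactly Poisson summation, carried out in the paper by the unfolding substitution $w \mapsto w - q$ over translates of the fundamental cell), and both finish by completing the square and evaluating a Gaussian integral to produce the factor $(x\omega^{-1})^{-(n+1)/2}$. The genuine divergence is in the endgame. The paper first specializes to $x = r\omega$ (i.e.\ $x_0=0$) and chooses the $\omega$-parts of the characteristic as $v_i = -q_i/r$ so that the shifted integration variable $t_i = w_i + \frac{1}{r}\omega q_i$ becomes \emph{real}; the Gaussian integral is then the classical real one, $\int_{-\infty}^{\infty} e^{-\pi r t_i^2}\,dt_i = r^{-1/2}$, and the identity, established only on the ray $\{r\omega : r>0\}$, is propagated to all of $H$ by the identity theorem for slice monogenic functions from the reference \cite{altavilla}. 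You instead keep $x = x_0 + r\omega$ general and evaluate the complex Gaussian $\int e^{\pi x\omega y^2}\,dy = (x\omega^{-1})^{-1/2}$ directly by a contour shift in $\mathbb{C}_{\omega}$, which is legitimate since $\mathrm{Re}(x\omega^{-1}) = r > 0$ on $H$. Your route buys a more self-contained argument that never leaves the single slice and dispenses with the identity-theorem step (and, as a side benefit, the dual-lattice pairing $\langle p,l\rangle \in \mathbb{Z}$ for $p \in L^{\sharp}$, $l\in L$ holds by definition, so you do not need to invoke integrality of $L$ at that point the way the paper's unfolding does); the paper's route buys a purely real Gaussian computation at the cost of importing a slice-function-theoretic continuation tool. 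Both handle the branch of $(x\omega^{-1})^{-(n+1)/2}$ the same way, via simple connectedness of $\mathbb{C}_{\omega}^{+}$.
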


\begin{proof} Consider the following auxiliary function:
$$
f_x(w)=f(w,x) := \sum\limits_{q \in L} \exp_* \left({\pi \langle q+w,q+w \rangle x \frac{\underline{x}}{\|\underline{x}\|}}\right).
$$
It is slice monogenic in $x$ by its definition.
Per construction we have $f_x(w+l) = f_x(w)$ for all $l \in L$, so $f$ is $L$-periodic and we also note that $f$ belongs to the  class $C^2$ and that the expression is integrable over $\mathbb R^{n+1}$.
We can write, using $ \langle q+w,q+w \rangle =\langle q,q\rangle+2\langle q,w\rangle+\langle w,w\rangle$ and the fact that all the summands have coefficients in $\mathbb{C}_\omega$:
$$
f(w,x)= \sum\limits_{q \in L} \exp_*(\pi \langle q+w,q+w \rangle (x_0+\omega r)\omega)= \sum\limits_{q \in L^{\sharp}} a_q(x) \exp(2\pi \langle q,w \rangle \omega),
$$
where $a_q(x)$ is $\mathbb C_{\omega}$-valued and slice monogenic.
We note that $f(w,x)$ is a multivariate Fourier-type series in the complex plane $\mathbb{C}_{\omega}$
with the commuting property:
\begin{eqnarray}\label{Fourier}
a_{q}(x) &=&  {|\det(L)|} \int_{[0,1]^{n+1}} f(w,x) \exp({-2 \pi  \langle q,w\rangle \omega}) du \\
 &=&  {|\det(L)|} \int_{[0,1]^{n+1}} \exp({-2 \pi  \langle q,w\rangle \omega}) f(w,x)  du ,\nonumber
\end{eqnarray}
where $w=u+\omega v$.
The expression $a_q(x)$ in formula \eqref{Fourier} is the Fourier transform $\mathcal{F}_{\omega}(f)$ performed on the complex plane $\mathbb C_{\omega}$ where $\omega\in\mathbb{S}^{n-1}$ (and it is fixed by $x$ that here works as a parameter). It is the classical Fourier transform where the imaginary unit $i$ of the complex numbers is replaced here by $\omega$. The $\omega\in\mathbb{S}^{n-1}$ is an element in the algebra and this is what makes the Fourier transform fully intrinsic (compare with Remark \ref{rmk4:10}, point 2).
Now, the function $f(w,x) \exp({-2 \pi  \langle q,w\rangle \omega})$ is slice monogenic. Therefore, if $x=x_0+\omega r$, then $f(w,x)$ is in the kernel of the Cauchy-Riemann operator of the complex plane $\mathbb C_{\omega}$. Consequently, the equality
$$f(w,x) e^{-2 \pi  \langle q,w\rangle \omega}=e^{-2 \pi  \langle q,w\rangle \omega} f(w,x) $$ leads to
\begin{eqnarray*}
a_q (x) &=&  {|\det(L)|} \int_{[0,1]^{n+1}}  f(w,x)e^{-2 \pi  \langle q,w\rangle \omega} du \\
   &=&  {|\det(L)|} \int_{[0,1]^{n+1}} \sum\limits_{q \in L} \exp_{*}({\pi \langle q+w,q+w \rangle (x_0+\omega r) \omega})  \exp({-2 \pi \langle q,w\rangle \omega} ) du.
\end{eqnarray*}
Due to the normal convergence of the series, see Proposition \ref{thetaconvergence}, one may interchange the integration process with the summation process so that the latter expression can be rewritten as:
$$
a_q(x) =  {|\det(L)|}\sum\limits_{q \in L} \int_{[0,1]^{n+1}}  \exp({-2 \pi  \langle q,w\rangle \omega}) \exp_{*}({\pi \langle q+w,q+w \rangle (x_0+\omega r) \omega} )  du.
$$
Next we apply a linear change of variable of the form $w \mapsto w -q$, leaving the differential invariant. This leads to
\begin{eqnarray*}
a_q (x_0+\omega r) &= &   {|\det(L)|}\sum\limits_{q \in L} \int_{[0,1]^{n+1}-q}
\exp({-2 \pi  \langle q,w-q\rangle \omega}) \exp_{*}({\pi (\sum_{i=0}^n w_i^2) (x_0+\omega r)\omega})   du\\
&= &   {|\det(L)|}\sum\limits_{q \in L} \int_{[0,1]^{n+1}-q}  \exp({-2 \pi  \langle q,w\rangle \omega}) \underbrace{e^{2 \pi  \langle q,q\rangle \omega}}_{=1}\exp_{*}({\pi  (\sum_{i=0}^n w_i^2)(x_0+\omega r) \omega}) du\\
&= &   {|\det(L)|}\sum\limits_{q \in L} \int_{[0,1]^{n+1}-q}   \exp({-2 \pi \langle q,w\rangle \omega}) \exp_{*}({\pi  (\sum_{i=0}^n w_i^2)(x_0+\omega r) \omega}) du.
\end{eqnarray*}
To apply this argument it was important (see Remark 3.9 Point 3) that $\omega$ is present in the definition, to make $\exp(2\pi \langle q,q \rangle \omega) = 1$ Without the $\omega$ this would not be true. Moreover it is crucial that $|q|^2\in\mathbb N_0$.
Note also that, although the standard $\exp({-2 \pi \langle q,w\rangle \omega})$ is in fact equal to  $\exp_*({-2 \pi \langle q,w\rangle \omega})$, for the computations below we need to compute the $*$-product and thus we use the first notations to emphasise that we work in the slice monogenic setting.
Next, we note that the functions $f(x)=-2 \pi \langle q,w\rangle \omega = -2 \pi \langle q,w\rangle \frac{\underline{x}}{\|\underline{x}\|}$ and $g(x)= \pi(\sum_{i=0}^n w_i^2) x \omega =  \pi(\sum_{i=0}^n w_i^2) x \frac{\underline{x}}{\|\underline{x}\|}$ are commuting with respect to the $*$-product so, by Theorem \ref{theoremcomm}, we have
\begin{eqnarray*}
& & \exp_*({-2 \pi \langle q,w\rangle \omega}) \exp_{*}(\pi(\sum_{i=0}^n w_i^2)(x_0+\omega r) \omega)\\
&=& \exp_*({-2 \pi \langle q,w\rangle \omega})* \exp_{*}({\pi(\sum_{i=0}^n w_i^2)(x_0+\omega r) \omega})\\
&=& \exp_{*}({(\pi (x_0+\omega r) (\sum_{i=0}^n w_i^2)-2\pi \langle q,w\rangle)\omega})\\
&=& \exp_{*}(\pi x ((\sum_{i=0}^n w_i^2)-2(x_0+\omega r)^{-1}\langle q,w\rangle)\omega)\\
\end{eqnarray*}

Hence we get
\[
\begin{split}
&a_q(x_0+\omega r) = {|\det(L)|} \int\limits_{]-\infty,+\infty[^{n+1}} \exp_{*}(\pi x ((\sum_{i=0}^n w_i^2)-2x^{-1}\langle q,w\rangle)\omega) du_0 \cdots du_n\\
&=
 {|\det(L)|} \int\limits_{]-\infty,+\infty[^{n+1}} \exp_{*}(\pi x ((\sum_{i=0}^n w_i^2)-2x^{-1} \langle q,w\rangle + x^{-2} |q|^2 - x^{-2}|q|^2)\omega) du_0 \cdots du_n\\
&=  {|\det(L)|} \exp_{*}(-\pi x^{-1}|q|^2\omega) \int\limits_{]-\infty,+\infty[^{n+1}} \exp_{*}(\pi x ((\sum_{i=0}^n w_i^2)-2x^{-1}\langle q,w\rangle +x^{-2}|q|^2)\omega) du_0 \cdots du_n\\
&=  {|\det(L)|} \exp_{*}(-\pi x^{-1}|q|^2\omega) \\
& \times \int\limits_{]-\infty,+\infty[^{n+1}} \exp_{*}(\pi x(\sum\limits_{i=0}^n w_i^2 - 2x^{-1} \sum\limits_{i=0}^n q_i w_i +x^{-2} \sum\limits_{i=0}^n q_i^2)
\omega) du_0 \cdots du_n.
\end{split}
\]
Notice here that $x$ is a general paravector from $\mathbb{R}^{n+1} \backslash \mathbb{R}$, so we here really deal with a hypercomplex expression. Next, the latter expression can be written as
\begin{equation}
a_q(x_0+\omega r)
=  {|\det(L)|} \exp_{*}(-\pi x^{-1}|q|^2\omega)
\times \Bigg(\prod\limits_{i=0}^n \int\limits_{-\infty}^{+\infty}\exp_{*}(\pi x (w_i^2-2x^{-1} q_i w_i + x^{-2} q_i^2)\omega) du_i \Bigg).
\end{equation}
Now put $x = r \omega$, $x_0=0$.
Then the latter equation becomes
$$
a_q =  {|\det(L)|} \exp_{*}(-\pi (r \omega)^{-1}|q|^2\omega) \times \prod\limits_{i=0}^n \Bigg(
\int\limits_{-\infty}^{+\infty} \exp_*(\pi r \omega(w_i^2+ \frac{2}{r}\omega q_i w_i - \frac{1}{r^2}q_i^2)\omega) du_i
\Bigg).
$$
Let us now decompose each $w_i \in \mathbb{C}_{\omega}$ in the form
$$
w_i = u_i + v_i \omega.
$$
Using this decomposition we can rewrite each term $$w_i - x^{-1} q_i = w_i + \frac{1}{r} \omega q_i$$ in the form   $u_i + v_i \omega
+ \frac{1}{r}\omega q_i$ for all $i=0,\ldots,n$. If we choose the $\omega$-part of each $w_i$ in the way $v_i = - \frac{1}{r} q_i$ then the expressions $t_i := w_i + \frac{1}{r} \omega q_i$ turn out to be real positive. So we can rewrite each term
$$
(w_i^2 + \frac{2}{r} \omega q_i w_i - \frac{1}{r^2}q_i^2) = (w_i+\frac{1}{r} \omega q_i)^2 =:t_i^2
$$
with a positive real parameter $t_i > 0$.
So the expression for the Fourier coefficient can be re-expressed as
\begin{eqnarray*}
a_q &=&  {|\det(L)|} \exp_{*}(-\pi (r \omega)^{-1}|q|^2 \omega) \times \Bigg(\prod\limits_{i=0}^n \int\limits_{-\infty}^{+\infty} \exp((\pi r \omega t_i^2)\omega) dt_i \Bigg)\\
&=&  {|\det(L)|} \exp_{*}(-\pi (r \omega)^{-1}|q|^2 \omega) \times \Bigg(\prod\limits_{i=0}^n \int\limits_{-\infty}^{+\infty} \exp(- \pi r  t_i^2) dt_i \Bigg).\\
& = &  {|\det(L)|} \exp_{*}(-\pi (r \omega)^{-1}|q|^2 \omega) \frac{1}{r^{\frac{n+1}{2}}}.
\end{eqnarray*}

So, for $x=\omega r$ we have obtained that
$$
\sum\limits_{q \in L} e^{\pi \langle q+w,q+w \rangle (\omega r) \omega} = r^{-(n+1)/2}  {|\det(L)|}
\sum\limits_{q \in L^{\sharp}} e^{\pi  |q|^2(-r)^{-1} \omega + 2 \pi  \langle q,w\rangle \omega}.
$$
In view of $x = r \omega$ we may identify $r$ by $x \omega^{-1}$.
In order to proceed further, we need to apply now a particular argument from slice monogenic function theory. The particular identity theorem for slice monogenic functions from \cite{altavilla} allows us to conclude from the previous line that we can then substitute $r \omega$ by $x_0 + r \omega$ so that actually
$$
\sum\limits_{q \in L} \exp_*({\pi \langle q+w,q+w \rangle  x \omega}) = (x \omega^{-1})^{-(n+1)/2} * {|\det(L)|}
\sum\limits_{q \in L^{\sharp}} \exp({\pi  |q|^2(-x)^{-1} \omega + 2 \pi  \langle q,w\rangle \omega})
$$
is true for all $x \in H$. It is clear that the left hand-side is slice monogenic. Also the right hand-side is slice monogenic by construction. The version of the identity theorem from \cite{altavilla} allows us to conclude the equality. We actually may observe that the $*$-product on the right-hand side is not necessary, hence we omit it in all that follows.

Note further that particularly, putting $\theta_L(x) := \Theta_L(x,0)$, we get
$$
\theta_L(x) =  {|\det(L)|} (x \omega^{-1})^{-(n+1)/2} \theta_{L^{\sharp}}(-x^{-1}).
$$
This completes the proof.
\end{proof}
We recall that in the case of a general lattice  we could alternatively also re-define $ \langle q+w,q+w \rangle$ as $(q+w)' S (q+w)$ where $S$ is an $(n+1) \times (n+1)$ positive symmetric matrix with real entries. In the case $S=I$ one then again obtains $\langle q+w,q+w \rangle =
\sum\limits_{i=0}^n (q_i + w_i)^2 = \sum\limits_{i=0}^n (q+w)_i^2$.
\begin{remark} {\rm
In the case $n=1$, the function $\theta_L(x)$ coincides with the classical theta null series in one complex variable attached to a two-dimensional lattice $L_2$,
cf. {\rm \cite{freitagb} p. 359}, namely with
$$
\theta_{L_2}(z) = \sum\limits_{q \in L_2} e^{\pi i |q|^2 z},
$$
transforming as
$$
\theta_{L_2}(z) =  {|\det(L_2)|} i z^{-1} \theta_{{L_2}^{\sharp}}(-\frac{1}{z}),
$$
see also {\rm Theorem VI.4.7} of {\rm \cite{freitagb}}.
To get the very classical theta series one has to extend the summation only over the one-dimensional lattice $\mathbb{Z}$ instead of extending it over $L_2$. The series $\theta_{\mathbb{Z}}(z)$ then transforms as $\theta_{\mathbb{Z}}(z) = (-iz)^{-1/2} \theta_{\mathbb{Z}}(-1/z)$.}
\end{remark}

The setting of the right half-space may be established by similar lines of arguments. Note that to ensure the invariance of the right half-space $H^r$ we have to consider, as suggested by A. Krieg in \cite{Krieg1988}, the modified inversion $x \mapsto x^{-1}$, because the usual Kelvin inversion $x \mapsto -x^{-1}$ does not preserve $H^r$.
\begin{theorem}\label{thetatrafoHr}(Theta transformation formula in the setting of $H^r$).\\
For all $x \in H^r = \{z \in \mathbb{R}^{n+1} \mid x_0 > 0\}$ and all $w \in \mathbb{C}_{\omega}^{n+1}$ where $\omega := \frac{\underline{x}}{\|\underline{x}\|}$ if $x \not \in \mathbb{R}$ and where $\omega$ can be chosen freely if $x \in \mathbb{R}^{>0}$  we have that
$$
\sum\limits_{q \in L} \exp_*(-\pi \langle q+w,q+w \rangle x) = x^{-(n+1)/2} | {\det(L)}| \Theta^r_{L^{\sharp}}(x^{-1},w),
$$
and particularly for $w=0$ we have, setting $\theta^r_L(x):=\Theta^r_L(x,0)$, $\theta^r_L(x)=x^{-(n+1)/2} | {\det(L)}| \theta^r_{L^{\sharp}}(x^{-1})$.
\end{theorem}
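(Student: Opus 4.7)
My plan is to adapt the proof of Theorem~\ref{thetatrafoH} to the right half-space, with the adjustments required by the modified inversion $x \mapsto x^{-1}$ that preserves $H^r$. First I would introduce the auxiliary function
$$
g(w,x) := \sum_{q\in L}\exp_*(-\pi\langle q+w, q+w\rangle\, x),
$$
observe that it is slice monogenic in $x$ on $H^r$, of class $C^2$ and integrable in $w$, and $L$-periodic in $w$ by the reindexing $q\mapsto q-l$. Since $w\in\mathbb{C}_\omega^{n+1}$ and $x\in\mathbb{C}_\omega$ with $\omega=\underline{x}/\|\underline{x}\|$ (or any fixed $\omega\in\mathbb{S}^{n-1}$ when $x\in\mathbb{R}^{>0}$), every factor in the summand lies in the commutative plane $\mathbb{C}_\omega$, which transplants the classical Fourier and Gaussian machinery with $i$ replaced by $\omega$.

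Next I would expand $g(\cdot,x)$ as a multivariate Fourier series over $\mathbb{C}_\omega$, namely $g(w,x)=\sum_{\hat q\in L^\sharp} b_{\hat q}(x)\exp(2\pi\langle \hat q,w\rangle\omega)$, with coefficients
$$
b_{\hat q}(x) = |\det(L)|\int_{[0,1]^{n+1}} g(w,x)\exp(-2\pi\langle \hat q,w\rangle\omega)\, du.
$$
Inserting the series, swapping sum and integral by normal convergence, performing the translation $w\mapsto w-l$ summand-by-summand, and using $\exp(2\pi\langle\hat q,l\rangle\omega)=1$ for $\hat q\in L^\sharp$, $l\in L$, unfolds the summation into a single integral over $\mathbb{R}^{n+1}$:
$$
b_{\hat q}(x) = |\det(L)|\int_{\mathbb{R}^{n+1}}\exp_*\bigl(-\pi\langle w,w\rangle x - 2\pi\langle\hat q,w\rangle\omega\bigr)\, du.
$$
Completing the square inside the $\mathbb{C}_\omega$-valued exponent, using $\omega^2=-1$ and the commutativity of $x$, $w_i$, $\omega$, gives
$$
-\pi\langle w,w\rangle x - 2\pi\langle\hat q,w\rangle\omega
= -\pi x\sum_{i=0}^n(w_i+x^{-1}\hat q_i\omega)^2 - \pi x^{-1}|\hat q|^2.
$$

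To evaluate the remaining Gaussian I would first specialize to $x=x_0\in\mathbb{R}^{>0}$: writing $w_i=u_i+v_i\omega$, the substitution $t_i:=w_i+x_0^{-1}\hat q_i\omega$ amounts to a contour shift inside the plane $\mathbb{C}_\omega$, legitimate by Cauchy's theorem and the Gaussian decay, after which the integral factorizes into $n+1$ real Gaussians and yields $x_0^{-(n+1)/2}$. Thus $b_{\hat q}(x_0) = |\det(L)|\,x_0^{-(n+1)/2}\exp(-\pi x_0^{-1}|\hat q|^2)$. Both sides being slice monogenic in $x$ on $H^r$, the identity theorem for slice monogenic functions from \cite{altavilla} extends the identity to $b_{\hat q}(x) = |\det(L)|\,x^{-(n+1)/2}\exp_*(-\pi x^{-1}|\hat q|^2)$ for every $x\in H^r$. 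Reassembling the Fourier series yields
$$
g(w,x) = x^{-(n+1)/2}|\det(L)|\sum_{\hat q\in L^\sharp}\exp_*\bigl(-\pi|\hat q|^2 x^{-1} + 2\pi\langle\hat q,w\rangle\omega\bigr),
$$
and because $\underline{x^{-1}}/\|\underline{x^{-1}}\|=-\omega$ while the lattice sum is invariant under $\hat q\mapsto-\hat q$, the right-hand side equals $x^{-(n+1)/2}|\det(L)|\,\Theta^r_{L^\sharp}(x^{-1},w)$. Setting $w=0$ then yields the identity for $\theta^r_L$.

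The main obstacle is bookkeeping rather than conceptual: the Fourier-then-Gaussian manipulations all reduce to classical one-variable complex identities on each slice $\mathbb{C}_\omega$, so the only delicate points are the legitimacy of the contour shift (Cauchy's theorem applied inside the two-dimensional plane $\mathbb{C}_\omega$, using the Gaussian decay and the fact that $x_0>0$) and the final sign accounting under the inversion $\omega\mapsto -\omega$, which is absorbed by the $\hat q\mapsto -\hat q$ symmetry of the summation.
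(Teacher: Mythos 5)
Your proposal is correct and follows essentially the same route as the paper's proof: the auxiliary $L$-periodic function, the Fourier expansion on $\mathbb{C}_\omega$ with $\omega$ playing the role of $i$, the unfolding of the integral, completion of the square, Gaussian evaluation on a distinguished subset, and the identity theorem for slice monogenic functions to extend to all of $H^r$. The only divergence is that you evaluate the Gaussian at real $x=x_0>0$ via a contour shift in $\mathbb{C}_\omega$, whereas the paper specializes to $x=r\omega$ and adjusts the $v$-part of $w$; your version also makes explicit two points the paper leaves implicit, namely the dual-lattice bookkeeping $\exp(2\pi\langle \hat q,l\rangle\omega)=1$ for $\hat q\in L^{\sharp}$, $l\in L$, and the absorption of the sign $\underline{x^{-1}}/\|\underline{x^{-1}}\|=-\omega$ by the symmetry $\hat q\mapsto-\hat q$.
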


\begin{proof}
In this setting we now define analogously
$$
f_x(w)=f(w,x) := \sum\limits_{q \in L} \exp_*(-\pi \langle q+w,q+w \rangle x).
$$
Again here we have $f(w+l)=f(w)$ for all $l \in L$, so also $f$ can be expanded in a Fourier series of the form $\sum\limits_{q \in L^{\sharp}} a_q(x) \exp(2 \pi \langle q,w \rangle \omega)$ with $a_q(x) =  {|\det(L)|} \int\limits_{[0,1]^{n+1}}f(w,x) e^{-2\pi \langle q,w \rangle \omega}$. By applying the same argumentation with the shift $w \mapsto w -q$ as in Theorem~\ref{thetatrafoH} we get using the decomposition $w = u + \omega v$:
\begin{eqnarray*}
a_q(x)
&=&  {|\det(L)|} \sum\limits_{q \in L} \int\limits_{[0,1]^{n+1} - q}\exp (-2 \pi \langle q,w \rangle \omega) \exp_*(-\pi (\sum\limits_{i=0}^n w_i^2)(x_0+r\omega))du.
\end{eqnarray*}
Now in the setting of $H^r$ the exponential expressions can be rewritten in the form
\begin{eqnarray*}
&  & \exp_*(-\pi(x_0+\omega r)(\sum\limits_{i=0}^n w_i^2) - 2 \pi \langle q,w \rangle \omega)\\
&= & \exp_*(-\pi(x_0+\omega r)\Big( \sum\limits_{i=0}^n w_i^2+2(x_0+r \omega)^{-1} \langle q,w \rangle \omega \Big))\\
&=&  \exp_*(-\pi x \Big( \sum\limits_{i=0}^{n} + 2x^{-1} \langle q,w \rangle \omega   \Big))\\
& = & \exp_*(\pi x \Big( \sum\limits_{i=0}^{n} - 2x^{-1} \langle q,w \rangle \omega   \Big)\omega^2)\\
&=& \exp_*(\pi x\Big(\sum\limits_{i=0}^n w_i^2 - 2x^{-1} \langle q,w \rangle \omega +x^{-2}|q|^2-x^{-2}|q|^2        \Big)\omega^2)\\
&=& \exp_*(\pi x(x^{-2}|q|^2)\omega^2) \exp(\pi x\Big(\sum\limits_{i=0}^n w_i^2-2x^{-1} \langle q,w \rangle \omega -x^{-2}|q|^2 \Big)\omega^2) \\
&=&\exp_*(-\pi (x^{-1}|q|^2)) \exp(\pi x\Big(\sum\limits_{i=0}^n w_i^2-2x^{-1} \langle q,w \rangle \omega +(x\omega)^{-2}|q|^2 \Big)\omega^2).
\end{eqnarray*}
Now take again $x = r \omega$. Then we can again  adjust the $v$-part of $w$ such that $w_i^2-2(r \omega)^{-1} q_i w_i\omega +r^{-2}q_i^2$ is a real positive entity that can be identified by $t_i^2$ with $t_i \in \mathbb{R}$.  So, finally one may obtain in this setting that
$$
a_q(r \omega) =   \exp_*(-\pi ((r \omega)^{-1}|q|^2)) \int\limits_{]-\infty,+\infty[^{n+1}} \exp(-\pi r \omega t^2) dt = \exp_*(-\pi ((r \omega)^{-1}|q|^2)) (\frac{1}{ {r\omega}})^{n+1},
$$
from which the stated identity follows after the application of the version of the identity theorem presented in \cite{bookfunctional,slicecss}.
\end{proof}

\begin{remark} {\rm
Note that in the monogenic setting, in general the transformation
$$
F(x) := \frac{\overline{x}}{|x|^a} f(-x^{-1})
$$
only preserves the monogenicity property if particularly $a=n+1$. Now in the slice monogenic case the expression $$(x\omega^{-1})^{-(n+1)/2}f(-x^{-1})$$ remains slice monogenic for any integer $n$. This allows us to associate to every lattice a slice monogenic theta series. A monogenic or $k$-hypermonogenic automorphic form with a transformation behavior of the form $f(x) = (x\omega^{-1})^{-(n+1)/2}f(-x^{-1})$ can probably not be found. Among all existing hypercomplex function theories the slice monogenic setting seems to be the only setting in which the construction of direct generalizations of theta series is possible. Notice also that the identity theorem offered by the theory of slice monogenic functions is much stronger than the identity theorem of monogenic functions. In the slice monogenic setting the coincidence of function values of two functions along one line already yields the identity over the whole space while in monogenic function theory one requires, in general, the coincidence on an $n$-dimensional submanifold.
}
\end{remark}

An interesting question arises around the periodicity of the theta series. While the slice monogenic theta series $\Theta(x,w)$ associated with $H = \mathbb{R}^{n+1}\backslash \mathbb{R}$ are $1$-fold periodic in $x$ with respect to the $x_0$-part, in association with its version for  the right half-space $H^r$ we do not have the usual periodicity but a radial periodicity in the reduced vector part $x_1 e_1 + \cdots + x_n e_n$. This will be explained in the following proposition. More precisely, we can say

\begin{proposition} (Periodicity properties).
Both theta series $\Theta_L(x,w)$ and $\Theta^r_L(x,w)$ are $n+1$-fold periodic with respect to the lattice $L$; in the second variable $w$  we have
$$
\Theta_L(x,w+l) = \Theta_L(x,w) \quad\quad \text{ for\;all}\ l \in L.
$$
and
$$
\Theta^r_L(x,w+l) = \Theta^r_L(x,w) \quad\quad \text{ for\;all}\ l \in L.
$$
Furthermore, $\Theta_L(x,w)$ satisfies $\Theta_L(x+2,w) = \Theta_L(x,w)$ for all $x,w$.

Writing the reduced vector part of $x$ in polar form, i.e. writing $x = x_0 + r \omega$ where as usual $\omega:= \frac{\underline x}{ {\|\underline x\|}}$ with a positive $r > 0$, then for the other kind of theta series $\Theta^r_L(x,w)$ we observe the following radial periodicity concerning the first variable of the form
$$
\Theta^r_L(x_0 + r \omega,w) = \Theta^r_L(x_0 +(r+2)\omega,w).
$$
\end{proposition}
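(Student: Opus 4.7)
The plan is to prove all four periodicities termwise, exploiting two structural inputs. First, for $x = x_0 + r\omega$ every exponent appearing in the summands of $\Theta_L(x,w)$ and $\Theta^r_L(x,w)$ lies in the complex slice $\mathbb{C}_\omega$, so the $*$-product reduces to ordinary multiplication on that slice; consequently Theorem~\ref{theoremcomm} factors the $*$-exponential across any decomposition of the exponent. Second, since $\omega^2 = -1$, the classical Euler identity holds intrinsically, so $\exp(2\pi k \omega) = 1$ for every $k \in \mathbb{Z}$.

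For the periodicities in $w$, I would substitute $w \mapsto w + l$ for $l \in L$, expand the bilinear form via $\langle q, w+l\rangle = \langle q, w\rangle + \langle q, l\rangle$, and peel off the extra factor $\exp_*(2\pi \langle q, l\rangle \omega)$ using Theorem~\ref{theoremcomm}. Integrality of $L$ (i.e.\ $\langle q, l \rangle \in \mathbb{Z}$ for $q, l \in L$, which is the standard hypothesis behind $|q|^2 \in \mathbb{N}_0$, obtained from the polarization identity $2\langle q,l\rangle = |q+l|^2 - |q|^2 - |l|^2$ together with an even-lattice convention) makes this factor identically $1$, so the remaining sum is exactly the unshifted theta series. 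The identical manipulation works for $\Theta^r_L$ since the only change is the sign of the $\pi|q|^2 x$ term, which is inert under the $w$-shift.

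For the $x$-periodicities, in $\Theta_L(x+2, w)$ the substitution $x \mapsto x+2$ contributes an extra $2\pi|q|^2 \omega$ to the exponent of the $q$-th summand; by the commuting-exponential factorization this peels off as $\exp(2\pi|q|^2 \omega)$, which equals $1$ because $|q|^2 \in \mathbb{N}_0$. For the radial periodicity of $\Theta^r_L$, writing $x = x_0 + r\omega$, the substitution $r \mapsto r+2$ replaces $x$ by $x + 2\omega$ and contributes $-2\pi |q|^2 \omega$ to the exponent; this again yields a trivial factor. In every case the resulting sum is literally the original series.

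The only point requiring genuine care is verifying that the commuting-exponential hypothesis of Theorem~\ref{theoremcomm} applies to each decomposition. This is immediate because the shifts that appear ($2$, $2\omega$, and $2\pi\langle q, l\rangle \omega$) are $\mathbb{C}_\omega$-valued constants in the summation variable $q$, and the remaining factor is a $*$-exponential of a $\mathbb{C}_\omega$-valued slice monogenic function in $x$; everything lives in a single commutative slice, so the factorization is straightforward. Normal convergence of both series, established in Proposition~\ref{thetaconvergence} and its $H^r$-analog, legitimizes the termwise reorganization and permits the recombination to yield the original theta series on the nose.
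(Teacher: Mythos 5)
Your proof is correct and follows essentially the same route as the paper's (very brief) argument: each periodicity is checked termwise by splitting off a unit factor $e^{2\pi k\omega}=1$ with $k\in\mathbb{Z}$, using that all exponents lie in the commutative slice $\mathbb{C}_\omega$ so that the $*$-exponential factorizes. You are in fact more careful than the paper on one point: the $w$-periodicity needs $\langle q,l\rangle\in\mathbb{Z}$, which does not follow from $|q|^2\in\mathbb{N}_0$ alone (polarization only gives $2\langle q,l\rangle\in\mathbb{Z}$), so your explicit appeal to integrality (or evenness) of $L$ correctly identifies the hypothesis the paper leaves implicit.
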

To the proof, one observes the $L$-periodicity in the variable $w$ by a direct rearrangement argument. In the case of $\Theta_L(x,w)$ the periodicity in the $x_0$-direction is also readily seen from its definition.

The radial periodicity in the reduced vector part of the first variable of $\Theta^r_L(x,w)$ is inherited by the radial periodic property of the slice monogenic $*$-exponential function $\exp_*$.

\par\medskip\par

\begin{remark} {\rm
In summary, our slice monogenic theta series are examples of $1$-fold-periodic or radially periodic quasi-modular forms on $H$ or $H^r$ , respectively. They are quasi-invariant under the inversion $x \mapsto -x^{-1}$, or the modified inversion $x \mapsto x^{-1}$ up to the automorphic factor
$\left(x \left(\frac{\underline{x}}{\|\underline{x}\|}\right)^{-1}\right)^{-\frac{n+1}{2}}$ or $x^{-\frac{n+1}{2}}$, respectively, and exhibit the above stated periodic or radially periodic behavior, respectively. The slice monogenic Eisenstein  and Poincar\'e series that we discussed in \cite{CKS2016} were also quasi-invariant under the inversion (but with a different automorphy factor) and they were invariant under translations in the $x_0$-direction like the theta series in the context of $H$.}
\end{remark}

\subsection{The conjugated theta functions}
For simplicity we now focus on the setting of $H$ from now on. All results presented in the sequel can be directly translated to the setting of $H^r$ when replacing the corresponding theta functions.

As previously introduced, we defined the slice monogenic theta-null function associated to an $n+1$-dimensional lattice $L$ as
$$
\theta(x):= \Theta_L(x,0) = \sum\limits_{q \in L} \exp_*\left({\pi |q|^2x \frac{\underline{x}}{\|\underline{x}\|}}\right),
$$
where we consider the same conditions on $L$ as in the beginning of Section~4.1. In particular we assumed that $|q|^2 \in \mathbb{N}_0$ which means that $L$ is supposed to be integral. To leave it simple we furthermore assume that $L$ is unimodular in all that follows in this subsection and the following one. We could perform the following considerations more generally, but then one has to use the dual lattice. In the case of unimodularity we simply have $L^{\sharp}=L$.

Now we introduce the following conjugated theta functions and study their invariance behavior. In turn these functions can be used as building blocks to construct slice monogenic quasi-modular forms.
\par\medskip\par
To introduce them, consider a system of representatives denoted by ${\cal{V}}(\frac{1}{2}L / L)$ of the quotient lattice $\frac{1}{2}L / L$.
A canonical choice is to take these representatives out of the set
$$
\{m_0 \mathfrak{Q}_0 + \ldots + m_n \mathfrak{Q}_n \}\quad{\rm with}\; 0 \le m_0 < 1,\;m_i \in \Big\{0,\frac{1}{2}\Big\},\;i=0,\ldots,n.
$$
For a fixed element $\tilde{q} \in {\cal{V}}(\frac{1}{2}L/L)$ we define the {\it first conjugated theta function} $\tilde{\theta}_{\tilde{q}}(x)$ as
\begin{eqnarray*}
\tilde{\theta}_{\tilde{q}}(x) & := & \Theta_L(x,\tilde{q}) \\
& = & \sum\limits_{q \in L} \exp_*({(\pi |q|^2x+2 \pi \langle q,\tilde{q}\rangle   )\omega})\\
& = & \sum\limits_{q \in L} \exp_*({\pi |q|^2 x \omega}) \cdot e^{2 \pi \langle q,\tilde{q}\rangle \omega}\\
& = &\sum\limits_{q \in L} \chi(q) \exp_*({\pi |q|^2 x \omega}),
\end{eqnarray*}
where
$$
\chi(q) = \left\{ \begin{array}{ll} 1 & {\rm if}\; |q|^2 \; {\rm even} \\
-1 & {\rm if} \; |q|^2 \; {\rm odd}  \end{array} \right.
$$
The definition of the {\it second conjugated theta function}
$\tilde{\tilde{\theta}}_{\tilde{q}}(x)$ is formally motivated by the theta transformation formula that we proved in the previous subsection.

We have
$$
\sum\limits_{q \in L} \exp_*^{\pi \langle q+w,q+w \rangle  x \omega} = (x \omega^{-1})^{*-\frac{n+1}{2}} {|\det(L)|} \Theta_L(-x^{-1},w).
$$
Notice that in contrast to the classical complex case, we cannot choose $w$ completely freely, because in our case $w=w(x)$ since we have to choose $w \in {\mathbb{C}}_{\omega}^{n+1}$. However, making nevertheless formally the substitution $w = \tilde{q}$ with $\tilde{q} \in {\cal{V}}(\frac{1}{2}L/L)$ motivates the definition:
$$
\tilde{\tilde{\theta}}_{\tilde{q}}(x) := \sum\limits_{q \in L} e^{\pi |q+\tilde{q}|^2 x \omega}.
$$
By construction this function then satisfies the relation:
\begin{equation}\label{theta1trafo}
\tilde{\tilde{\theta}}_{\tilde{q}}(x) = (x \omega^{-1})^{-\frac{n+1}{2}}  {|\det(L)|}  \tilde{\theta}_{\tilde{q}}(-x^{-1}).
\end{equation}
If we replace $x$ by $-x^{-1}$ in equation~(\ref{theta1trafo}) then we also get
$$
\tilde{\tilde{\theta}}_{\tilde{q}}(-x^{-1})  =  (-x^{-1} \omega^{-1})^{-\frac{n+1}{2}} {|\det(L)|} \tilde{\theta}_{\tilde{q}}(x)
$$
which is equivalent to
$$
\tilde{\theta}_{\tilde{q}}(x) = (x^{-1}(-\omega^{-1}))^{\frac{n+1}{2}} \frac{1}{ {|\det(L)|}} \tilde{\tilde{\theta}}_{\tilde{q}}(-x^{-1}).
$$
So, we arrived at
\begin{equation}\label{theta2trafo}
\tilde{\theta}_{\tilde{q}}(x) = (x \omega^{-1})^{-\frac{n+1}{2}} \frac{1}{ {|\det(L)|}} \tilde{\tilde{\theta}}_{\tilde{q}}(-x^{-1}).
\end{equation}
We end this subsection by giving the following more general definition of the conjugated theta functions (where we involve the general parameter $w$, which however in contrast to the classical complex variable case is fixedly related to $\omega$ depending on $x$).
\begin{definition} Let $L$ be a general $(n+1)$ dimensional integral lattice in $\mathbb{R}^{n+1}$.

For all $x \in H$ and $w \in \mathbb{C}_{\omega}^{n+1}$ we define
$$
{\tilde{\Theta}}_L(x,w) = \sum\limits_{q \in L} \chi(q) \exp_{*}({\pi  |q|^2 x \omega}) e^{2 \pi \langle q,w\rangle \omega}
 $$
and
$$
{\tilde{\tilde{\Theta}}}_L(x,w) = \sum\limits_{q \in \frac{1}{2} L \backslash L} \exp_{*}({\pi  |q|^2 x \omega})
e^{2 \pi  \langle q,w\rangle \omega}
$$
where
$$
\chi(q) = \left\{ \begin{array}{ll} 1 & {\rm if}\; |q|^2 \; {\rm even} \\
                                   -1 & {\rm if} \; |q|^2 \; {\rm odd}  \end{array} \right.
$$
\end{definition}
Notice that whenever $|q|^2$ is even then we have $e^{\pi  |q|^2 \omega} = 1$. If $|q|^2$ is odd then $e^{\pi  |q|^2 \omega} = -1$.
\par\medskip\par
For $w=0$ we re-obtain the particular conjugated theta functions  $\tilde{\theta}_{\tilde{q}}(z)$ and $\tilde{\tilde{\theta}}_{\tilde{q}}(z)$.

\begin{proof} In view of $|\chi(q)|=1$ we can majorize the series ${\tilde{\Theta}}_L(x,w)$ by the series of the moduli of the theta series ${\Theta}_L(x,w)$. Concretely speaking, we have
$$
\|{\tilde{\Theta}}_L(x,w)\| \le \sum\limits_{q \in L} \|\exp_{*}({\pi |q|^2 x \omega}) \cdot e^{2 \pi \langle q,w\rangle \omega}\| \le C < \infty,
$$
for any pair $(x,w)$ belonging to a compact subset of $H  \times \mathbb{C}_{\omega}^{n+1}$ where we use the convergence argument applied in Proposition~\ref{thetaconvergence}.

\par\medskip\par

Similarly, we may argue that ${\tilde{\tilde{\Theta}}}_L(x,w)$ converges normally on $H  \times \mathbb{C}_{\omega}^n$, since this series is majorized by the theta series
$$
\Theta_{\frac{1}{2}L} (x,w) = \sum\limits_{q \in \frac{1}{2} L} \exp_{*}({\pi |q|^2 x \omega}) e^{2 \pi \langle q,w\rangle \omega}
$$
which is nothing else than the slice monogenic theta series for the larger lattice $\frac{1}{2}L$. This series over the larger lattice is still convergent according to the statement of Proposition~\ref{thetaconvergence}, because it guarantees the convergence for any arbitrary $n+1$-dimensional lattice, so in particular for $\frac{1}{2}L$.
\end{proof}

\subsection{Slice monogenic generalization of the Dedekind eta function and the modular discriminant}
In this section, we apply the slice monogenic conjugated theta functions that we defined in the previous section to introduce a slice monogenic generalization of the third power of the Dedekind $\eta$-function and a generalization of the modular discriminant $\Delta$ which also represented a central missing piece in the hypercomplex setting of automorphic forms.
\begin{definition}
Let $x \in H$ and $L$ be an $n+1$-dimensional unimodular lattice. Furthermore, fix a representative $\tilde{q}$ from $\frac{1}{2} L/L$.

Now we define the function
\begin{equation}\label{etadef}
\tilde{\eta}_{\tilde{q}}(x) := \theta(x) * \tilde{\theta}_{\tilde{q}}(x) * \tilde{\tilde{\theta}}_{\tilde{q}}(x),
\end{equation}	
where $*$ again is the star product of slice monogenic functions.
\end{definition}
This function generalizes up to a constant the third power of the Dedekind eta function which is a quasi-modular form of weight $1/2$. In our case we shall see that $\tilde{\eta}_{\tilde{q}}(x)$ is a slice monogenic quasi-modular form of weight $\frac{3(n+1)}{2}$. More precisely we will show:
\begin{theorem}
Let $L$ be an $n+1$-dimensional unimodular lattice in $\mathbb{R}^{n+1}$ and $\tilde{q}$ be a representative from $\frac{1}{2} L/L$. Then the above defined function $\tilde{\eta}_{\tilde{q}}(x)$
satisfies for each $x \in H$ the following transformation formula	
\begin{equation}\label{etatrafo}
\tilde{\eta}_{\tilde{q}}(x) = (x \omega^{-1})^{-\frac{3(n+1)}{2}}  {|\det(L)|} \tilde{\eta}_{\tilde{q}}(-x^{-1}).
\end{equation}
\end{theorem}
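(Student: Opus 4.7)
The plan is to combine the three transformation formulas already available in the subsection, one for each factor of $\tilde{\eta}_{\tilde{q}}$. Under the standing assumption that $L$ is unimodular we have $L^\sharp=L$ and $|\det(L)|=1$, so Theorem~\ref{thetatrafoH} applied to $\theta(x)=\Theta_L(x,0)$ gives
$\theta(x)=(x\omega^{-1})^{-(n+1)/2}\theta(-x^{-1})$,
while equations~(\ref{theta1trafo}) and~(\ref{theta2trafo}) yield
$\tilde{\tilde{\theta}}_{\tilde{q}}(x) = (x\omega^{-1})^{-(n+1)/2}\tilde{\theta}_{\tilde{q}}(-x^{-1})$
and
$\tilde{\theta}_{\tilde{q}}(x) = (x\omega^{-1})^{-(n+1)/2}\tilde{\tilde{\theta}}_{\tilde{q}}(-x^{-1})$.
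Plugging these three identities into the defining formula~(\ref{etadef}) of $\tilde{\eta}_{\tilde{q}}(x)$ formally gives three copies of $(x\omega^{-1})^{-(n+1)/2}$, multiplied with the three conjugated theta values at $-x^{-1}$.

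The next step is to justify that these three scalar-valued factors $(x\omega^{-1})^{-(n+1)/2}$ may be pulled out of the $*$-product, and that the right-hand product $\theta(-x^{-1})*\tilde{\tilde{\theta}}_{\tilde{q}}(-x^{-1})*\tilde{\theta}_{\tilde{q}}(-x^{-1})$ may be reordered into $\theta(-x^{-1})*\tilde{\theta}_{\tilde{q}}(-x^{-1})*\tilde{\tilde{\theta}}_{\tilde{q}}(-x^{-1})=\tilde{\eta}_{\tilde{q}}(-x^{-1})$. The key observation is that when we restrict any of the functions $\theta$, $\tilde{\theta}_{\tilde{q}}$, $\tilde{\tilde{\theta}}_{\tilde{q}}$, as well as $(x\omega^{-1})^{-(n+1)/2}$, to a slice $\mathbb{C}_\omega$, they all take values in the commutative subfield $\mathbb{C}_\omega$, since each summand is the $*$-exponential of a $\mathbb{C}_\omega$-valued expression. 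On such a slice the $*$-product coincides with ordinary pointwise multiplication (see the general rule~(\ref{*prod}) applied when $\beta=\delta=0$ in the $\mathbb{C}_\omega$-picture), and in particular it is commutative. So the algebraic manipulations above are legitimate on every slice.

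It remains to promote the resulting slice-wise identity
$\tilde{\eta}_{\tilde{q}}(x)=(x\omega^{-1})^{-3(n+1)/2}\tilde{\eta}_{\tilde{q}}(-x^{-1})$
to the whole of $H$. Both sides are slice monogenic in $x$ on $H$ (the product of slice monogenic functions via $*$ is slice monogenic, and $(x\omega^{-1})^{-3(n+1)/2}$ is a well-defined slice monogenic function thanks to the branch choice recalled before Theorem~\ref{thetatrafoH}, since each $\mathbb{C}_\omega^+$ is simply connected), so the version of the identity theorem from \cite{altavilla} used in the proof of Theorem~\ref{thetatrafoH} applies and gives the equality on all of $H$. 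Finally, we reinsert the factor $|\det(L)|$, which equals $1$ in the unimodular case, so that the formula retains the form~(\ref{etatrafo}) that will be the natural shape of the statement in the non-unimodular generalization.

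The only genuinely delicate point is the commutativity/reordering step: one must be convinced that both the scalar factor $(x\omega^{-1})^{-(n+1)/2}$ and each conjugated theta function are slice functions of the form $\alpha+\omega\beta$ whose components on a given $\mathbb{C}_\omega$ lie in $\mathbb{C}_\omega$, so that Theorem~\ref{theoremcomm}-style commutativity applies and the $*$-products can be freely rearranged. Everything else is mechanical.
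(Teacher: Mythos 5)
Your proposal is correct and follows essentially the same route as the paper's own proof: substitute the three transformation formulas (Theorem~\ref{thetatrafoH} together with equations~(\ref{theta1trafo}) and~(\ref{theta2trafo})) into the definition~(\ref{etadef}), collect the three scalar automorphy factors, and reorder the $*$-product using the fact that the conjugated theta functions commute on each slice $\mathbb{C}_\omega$. Your added identity-theorem step and the $|\det(L)|=1$ shortcut for unimodular $L$ are harmless refinements of the same argument, not a different method.
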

\begin{proof}
	To show this transformation behavior we apply the transformation formulas (\ref{theta1trafo}) and (\ref{theta2trafo}) in the definition (\ref{etadef}). Precisely speaking, we have
	\begin{eqnarray*}
    \tilde{\eta}_{\tilde{q}}(x) &=& \theta(x) * \tilde{\theta}_{\tilde{q}}(x) * \tilde{\tilde{\theta}}_{\tilde{q}}(x)\\
    &=& (x \omega^{-1})^{-\frac{n+1}{2}}  {|\det(L)|} \theta(-x^{-1})\\
    & &  *  (x \omega^{-1})^{-\frac{n+1}{2}} \frac{1}{ {|\det(L)|}} \tilde{\tilde{\theta}}_{\tilde{q}}(-x^{-1})\\
    & &  *  (x \omega^{-1})^{-\frac{n+1}{2}}  {|\det(L)|} \tilde{\theta}_{\tilde{q}}(-x^{-1})\\
    &=& (x \omega^{-1})^{-\frac{3(n+1)}{2}}  {|\det(L)|}
    \Big( \theta(-x^{-1}) *   \tilde{\tilde{\theta}}_{\tilde{q}}(-x^{-1})*  \tilde{\theta}_{\tilde{q}}(-x^{-1})\Big)\\
    &=& (x \omega^{-1})^{-\frac{3(n+1)}{2}}  {|\det(L)|} \Big( \theta(-x^{-1}) * \tilde{\theta}_{\tilde{q}}(-x^{-1})*   \tilde{\tilde{\theta}}_{\tilde{q}}(-x^{-1})\Big)\\
    &=& (x \omega^{-1})^{-\frac{3(n+1)}{2}}  {|\det(L)|}\tilde{\eta}_{\tilde{q}}(-x^{-1}).
	\end{eqnarray*}
In the proof we used the property
$$
 \tilde{\tilde{\theta}}_{\tilde{q}}(-x^{-1})* \tilde{\theta}_{\tilde{q}}(-x^{-1})=
\tilde{\theta}_{\tilde{q}}(-x^{-1})*   \tilde{\tilde{\theta}}_{\tilde{q}}(-x^{-1})
$$
which follows from the fact that the two theta functions do commute on the complex plane $\mathbb C_\omega$.
\end{proof}

\begin{remark} {\rm
	In the case of extending the summation only over a one-dimensional lattice, the resulting function then satisfies the transformation behavior
	$$
	\tilde{\eta}_{\tilde{q}}(x) = (x \omega^{-1})^{-\frac{3}{2}}  {|\det(L)|} \tilde{\eta}_{\tilde{q}}(-x^{-1})
	$$
	If we substitute $L$ by $\mathbb{Z}$ and if we consider a complex variable (case $n=1$), then we get the usual transformation behavior of the third power of the Dedekind eta function of the form
	$$
	\eta^3(z) = (-zi)^{-3/2} \eta^3(-\frac{1}{z}).
	$$}
\end{remark}

With these tools in hand we can finally define the slice monogenic modular discriminant function:
\begin{definition}
	Let $x \in H$ and $L$ be an $n+1$-dimensional unimodular lattice. Furthermore, fix a representative $\tilde{q}$ from $\frac{1}{2} L/L$.
	Then the slice monogenic associated modular discriminant can be defined as
	$$
	\triangle_{L,\tilde{q}}(x) := \Big(\tilde{\eta}_{\tilde{q}}(x)\Big)^{*8} = \Big(
	\theta(x) * \tilde{\theta}_{\tilde{q}}(x) * \tilde{\tilde{\theta}}_{\tilde{q}}(x)
	  \Big)^{*8}.
	$$
\end{definition}
In view of the transformation formulas (\ref{theta1trafo}) and (\ref{theta2trafo}) we can directly establish that the slice monogenic discriminant transforms like
\begin{eqnarray*}
\triangle_{L,\tilde{q}}(x)&=& (x \omega^{-1})^{-12(n+1)} { {|\det(L)|}}^8 \Big(
\theta(-x^{-1}) * \tilde{\theta}_{\tilde{q}}(-x^{-1}) * \tilde{\tilde{\theta}}_{\tilde{q}}(-x^{-1})
\Big)^{*8}\\
&=&  (x \omega^{-1})^{-12(n+1)}  |\det(L)|^4 \triangle_{L,\tilde{q}}(-x^{-1}).
\end{eqnarray*}
If we again substitute $L$ by $\mathbb{Z}$ and if put $n=1$ then we obtain the well-known usual relation of the classical discriminant
$$
\triangle(z) = z^{-12}\triangle\Big(-\frac{1}{z}\Big).
$$
\subsection{Differential equations}
The next theorem shows that the theta series $\Theta_L(x,w)$ satisfies the following  heat equation
which involves the slice derivative:
$$
[\Delta_w - c \partial_{s,x}] f(x,w) = 0.
$$
Here $\Delta_w$ stands for the Euclidean Laplacian $\Delta_w := \sum\limits_{i=0}^n \frac{\partial^2}{\partial w_i^2}$ and $\partial_{s,x}$ for the slice derivative with respect to $x$, denoted for short $\partial_s$, (for the definition see for example \cite{bookfunctional}).
However, in contrast to the usual heat equation, this version here involves the slice monogenic derivative instead of the usual temporal partial derivative $\partial_t$. Precisely, we have
\begin{theorem}
The theta series $\Theta_L(x,w)$ satisfy the differential equation
$$
[\Delta_w - 4 \pi \omega  \partial_s] \Theta_L(x,w) = 0
$$
\end{theorem}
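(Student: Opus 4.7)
The plan is to verify the PDE term-by-term in the defining series and then pass the operators through the sum, which is legitimate by the normal convergence established in Proposition~\ref{thetaconvergence}. So I fix $q \in L$ and consider
\[
F_q(x,w) := \exp_*\Bigl((\pi |q|^2 x + 2\pi \langle q,w\rangle)\,\tfrac{\underline{x}}{\|\underline{x}\|}\Bigr).
\]
The key observation is that, once the slice $\mathbb{C}_\omega$ is fixed (equivalently, once $x \in \mathbb{C}_\omega$ is fixed), the exponent of $F_q$ takes values in the commutative plane $\mathbb{C}_\omega$, so $\exp_*$ reduces to the ordinary exponential on that slice. Hence all subsequent differentiations reduce to elementary complex calculus in the imaginary unit $\omega$.

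First I would compute $\Delta_w F_q$. Since $w = \sum_i e_i w_i$ with $w_i \in \mathbb{C}_\omega$, and $\langle q,w\rangle = \sum_i q_i w_i$ is linear in each $w_i$ with real coefficient $q_i$, differentiating in the holomorphic sense on each slice gives
\[
\partial_{w_i} F_q = 2\pi q_i\, \omega\, F_q, \qquad
\partial_{w_i}^2 F_q = (2\pi q_i\,\omega)^2 F_q = -4\pi^2 q_i^2\, F_q,
\]
where I used $\omega^2 = -1$. Summing over $i=0,\dots,n$ and recalling $|q|^2 = \sum_i q_i^2$ yields
\[
\Delta_w F_q = -4\pi^2 |q|^2\, F_q.
\]

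Next I compute $\partial_s F_q$. On the slice $\mathbb{C}_\omega$, slice monogenicity means $F_q$ is holomorphic in $x=x_0+r\omega$ and the slice derivative coincides with $\partial_{x_0}$. Treating $\omega$ and $w$ as constants on that slice,
\[
\partial_s F_q = \pi |q|^2\,\omega\, F_q.
\]
Multiplying by $4\pi\omega$ from the left and using $\omega^2=-1$ gives
\[
4\pi\omega\, \partial_s F_q = 4\pi^2|q|^2\, \omega^2\, F_q = -4\pi^2 |q|^2\, F_q.
\]
Thus $(\Delta_w - 4\pi \omega\,\partial_s) F_q = 0$ for every $q \in L$.

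Finally I would justify exchanging the differential operators with the summation. By Proposition~\ref{thetaconvergence} the series defining $\Theta_L(x,w)$ converges normally on $H\times \mathbb{C}_\omega^{n+1}$, and the same estimates (with $|q|^2$ or $|q|$ factors inserted) show normal convergence of the formally differentiated series, since each derivative only contributes polynomial factors in $q_i$ that are absorbed by the exponential decay $e^{-\pi|q|^2 r_0/2}$ used in the convergence proof. Consequently the term-by-term cancellation above extends to the whole series, proving
\[
(\Delta_w - 4\pi\omega\,\partial_s)\,\Theta_L(x,w) = 0.
\]
The only minor subtlety is the dependence of $\omega$ on $x$, but this is harmless: differentiating $F_q$ while staying on the slice $\mathbb{C}_\omega$ is well-defined, and this is exactly the setting in which the slice derivative $\partial_s$ is computed.
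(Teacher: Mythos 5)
Your proposal is correct and follows essentially the same route as the paper: a direct term-by-term computation on each slice $\mathbb{C}_\omega$ giving $\Delta_w F_q = -4\pi^2|q|^2 F_q$ and $4\pi\omega\,\partial_s F_q = -4\pi^2|q|^2 F_q$, hence cancellation. The only difference is that you make explicit the interchange of the differential operators with the summation via the normal convergence from Proposition~\ref{thetaconvergence}, a point the paper leaves implicit.
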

{\it Proof}. The proof can be done by a direct computation. Clearly, for any $i=0,\ldots, n$ we have:
$$
\frac{\partial^2}{\partial w_i^2} \Theta_L(x,w) = \sum\limits_{q \in L} (2 \pi \omega q_i)^2 \exp_{*}({\pi  |q|^2 x \omega+2 \pi  \langle q,w\rangle \omega}),
$$
so that in summary one gets that
$$
\Delta_w \Theta_L (x,w) = \sum\limits_{q \in L} (2 \pi \omega)^2 |q|^2 \exp_{*}({\pi  |q|^2 x \omega +2 \pi  \langle q,w\rangle \omega}) .
$$
On the other hand one obtains by applying the slice monogenic derivative the expression
$$
\partial_s \Theta (x,w) = \sum\limits_{q \in L} (\pi \omega) |q|^2 \exp_{*}({\pi  |q|^2 x \omega +2 \pi  \langle q,w \rangle \omega})
$$
which coincides with the expression in the preceding line after multiplying it with the constant $(4 \pi \omega)$.  The stated result is proven.
\par\medskip\par
\begin{remark}{\rm By a similar direct computation we obtain that also the other slice monogenic theta series are solutions to the slice heat equation.}
\end{remark}

\subsection{A monogenic generalized theta function}

When we apply Fueter's theorem to the slice monogenic theta series that we introduced before then we obtain a generalization of theta series in the monogenic setting. To leave it simple we explain this procedure explicitly in the quaternionic case looking at the right half-space model working with $H^r = \{x \in \mathbb{H} \mid x_0 > 0\}$ since in this setting Fueter's theorem can be applied directly. Again, for simplicity we discuss the case $w=0$ addressing monogenic generalizations of the theta null series.

Before we can give the definition we first need to recall the definition of the following axially monogenic exponential function.

For any integer $k\geq 2$ consider the monogenic functions
$$f_k(x)=\Delta
(x^k)=\overset{\sim}f_k(x)=-4\displaystyle\sum_{j=1}^{k-1}(k-j)x^{k-j-1}\overline{x}^{j-1},$$
and then, for any $n\in\mathbb N_0$ define
$$
Q_n(x)=-\frac{f_{n+2}(x)}{(n+2)(n+1)}, \qquad n\in\mathbb N_0.
$$
In more explicit terms, we have
\begin{equation} \label{Qk2}
\displaystyle \mathcal{Q}_k(x)=\sum_{j=0}^k T^k_j x^{k-j}\overline{x}^j
\end{equation}
where $$\displaystyle
T^k_j:=T^{k}_{j}(3)=\frac{k!}{(3)_k}\frac{(2)_{k-j}(1)_j}{(k-j)!j!}=\frac{2(k-j+1)}{(k+1)(k+2)}$$
and $(a)_n=a(a+1)...(a+n-1)$ is the Pochhammer symbol.

For $x\in \mathbb{H}$ let
$${\rm
{Exp}}(x):=\sum_{k=0}^\infty\frac{Q_k(x)}{k!}$$ be the generalized
Cauchy-Fueter regular exponential function, see \cite{CMF2011}.

In this setting we can now define
\begin{definition} Let $x \in \mathbb{H}^r = \{x \in \mathbb{H} \mid x_0 > 0\}$ and let $L \subset \mathbb{H}$ be an arbitrary lattice with $|q|^2  \in \mathbb{N}_0$ for all $q \in L$. Then the attached monogenic theta function is defined by
$$
\theta_L^M(x) = \Delta_x\Big[\sum\limits_{q \in L} \exp_*(-\pi |q|^2 x)   \Big] = \sum\limits_{q \in L} {\rm Exp}(-\pi |q|^2 x),
$$
where ${\rm Exp}(x)$ is the axially monogenic exponential function introduced before.
\end{definition}
The theta transformation formula that we derived in Theorem \ref{thetatrafoHr} for the slice monogenic theta series will provide us with a functional equation for the monogenic theta function when applying Fueter's theorem to both sides of the equation. We prove:
\begin{theorem}(Functional equation for the monogenic theta function).\\
For each $x \in \mathbb{H}^r = \{x \in \mathbb{H} \mid x_0 > 0\}$ the monogenic theta function $\theta_L^M(x)$ satisfies the functional equation
\begin{eqnarray} \label{monthetatrafo}
\theta_L^M(x) &=& |\det(L)| \Bigg( x^{-\frac{n+1}{2}} \Delta_x \Big[\theta_{L^{\sharp}}(x^{-1})\Big] + \Delta_x \Big[x^{-\frac{n+1}{2}}  \Big] \theta_{L^{\sharp}}(x^{-1})\Bigg) \\ & + &
2 |\det(L)| \Bigg( \sum\limits_{A,B \subseteq \{1,2,\ldots,n\}} \langle {\rm grad} \;[x^{-\frac{n+1}{2}}]_A, {\rm grad}\;
[\theta_{L^{\sharp}}(x^{-1})]_B\rangle e_A e_B\Bigg), \nonumber
\end{eqnarray}
where a Clifford algebra valued expression $f$ is represented in its real components according to $f = \sum\limits_{A \subseteq \{1,2,\ldots,n\}} f_A e_A$ and where $\langle \cdot,\cdot\rangle$ stands for the standard scalar product on $\mathbb{R}^{n+1}$. The term $(\Delta_x \theta_L(x^{-1}))$ can be explicitly expressed by the monogenic theta series,
\begin{equation}\label{laplacetheta}
\Delta_x \Big[\theta_{L^{\sharp}}(x^{-1})\Big] = \frac{1}{|x|^4} \Bigg(  \Theta_{L^{\sharp}}^M(x^{-1}) - 4 x_0 [\frac{\partial \Theta_{L^{\sharp}}}{\partial x_0}](x^{-1})   + 4 \sum\limits_{i=1}^3 x_i [\frac{\partial \Theta_{L^{\sharp}}}{\partial x_i}](x^{-1})
\Bigg).
\end{equation}
\end{theorem}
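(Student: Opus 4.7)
The plan is to apply the Laplacian $\Delta_x$ to both sides of the slice monogenic theta transformation formula of Theorem~\ref{thetatrafoHr}, namely
\[
\theta^r_L(x) \;=\; |\det(L)|\; x^{-(n+1)/2}\, \theta^r_{L^{\sharp}}(x^{-1}),
\]
and to identify the left-hand side with $\theta^M_L(x)$. This identification is the defining formula of the monogenic theta function and it is legitimate because Fueter's theorem in the quaternionic case (where the Fueter exponent $(n-1)/2$ equals $1$) guarantees that $\Delta$ sends the slice monogenic right-hand side to a monogenic function on $\mathbb{H}^r$. It then remains to expand the Laplacian of the product on the right via a Clifford-valued Leibniz rule, and separately to compute $\Delta_x[\theta^r_{L^{\sharp}}(x^{-1})]$ by the chain rule.

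For the first step I would apply the Leibniz identity for $\Delta$ to the Clifford-algebra valued product $f\cdot g$ with $f:=x^{-(n+1)/2}$ and $g:=\theta^r_{L^{\sharp}}(x^{-1})$. Writing $f=\sum_A f_A e_A$ and $g=\sum_B g_B e_B$, the scalar identity $\Delta(f_A g_B)=(\Delta f_A)g_B+f_A(\Delta g_B)+2\langle \nabla f_A, \nabla g_B\rangle$ holds componentwise, so multiplying by $e_A e_B$ and summing gives
\[
\Delta(fg) \;=\; (\Delta f)\,g \;+\; f\,(\Delta g) \;+\; 2\sum_{A,B\subseteq\{1,\ldots,n\}}\langle \nabla f_A, \nabla g_B\rangle\, e_A e_B.
\]
The cross-term cannot be folded into a single scalar pairing $2\langle\nabla f,\nabla g\rangle$ because of Clifford noncommutativity; this componentwise sum is precisely the third summand of formula~\eqref{monthetatrafo}, with the overall factor $|\det(L)|$ inherited from Theorem~\ref{thetatrafoHr}.

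Next I would establish~\eqref{laplacetheta} by a direct chain-rule computation. Setting $y:=x^{-1}=\bar{x}/|x|^2$, so that $y_0=x_0/|x|^2$ and $y_i=-x_i/|x|^2$ for $i=1,2,3$, the chain rule yields
\[
\Delta_x[\theta_{L^{\sharp}}(y)] \;=\; \sum_{i,j}(\partial_i\partial_j\theta_{L^{\sharp}})(y)\sum_k \frac{\partial y_i}{\partial x_k}\frac{\partial y_j}{\partial x_k} \;+\; \sum_j (\partial_j\theta_{L^{\sharp}})(y)\,\Delta_x y_j.
\]
The quadratic piece collapses because $x\mapsto x^{-1}$ is conformal on $\mathbb R^4\setminus\{0\}$: its Jacobian $Dy$ satisfies $(Dy)^{\top}(Dy)=|x|^{-4}\,I$, so the double sum reduces to $|x|^{-4}(\Delta_y\theta_{L^{\sharp}})(y)=|x|^{-4}\Theta^M_{L^{\sharp}}(x^{-1})$. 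For the linear piece I would exploit that $|x|^{-2}$ is harmonic in $\mathbb R^4$ (the critical dimension $n+1=4$): the scalar product rule gives $\Delta_x(x_\ell/|x|^2)=2\langle\nabla x_\ell,\nabla(|x|^{-2})\rangle = -4x_\ell/|x|^4$, whence $\Delta_x y_0=-4x_0/|x|^4$ and $\Delta_x y_i=+4x_i/|x|^4$ for $i=1,2,3$. Collecting the two pieces reproduces~\eqref{laplacetheta} with its characteristic asymmetric sign pattern.

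I expect the main obstacle to be the bookkeeping forced by Clifford noncommutativity in the Leibniz step: one must resist the temptation to collapse the cross-term into a single scalar inner product, and must instead keep the expansion in the Clifford basis $\{e_A e_B\}$ honest, since the products $e_A e_B$ do not commute past the real gradient pairings. The chain-rule computation, by contrast, is conceptually clean thanks to the conformality of inversion and the harmonicity of $|x|^{-2}$ in four dimensions; both are dimension-specific features of $\mathbb H$ and are the precise reason why the theorem is formulated only in the quaternionic setting rather than in a generic $\mathbb R^{n+1}$.
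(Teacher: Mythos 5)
Your proposal follows essentially the same route as the paper: apply $\Delta_x$ to both sides of the transformation formula of Theorem~\ref{thetatrafoHr}, expand the right-hand side with the componentwise Leibniz rule $\Delta(fg)=f\Delta g+(\Delta f)g+2\sum_{A,B}\langle\nabla f_A,\nabla g_B\rangle e_Ae_B$, and then compute $\Delta_x[\theta_{L^{\sharp}}(x^{-1})]$ by the chain rule. The only difference is that you actually derive the inversion formula (via conformality of $x\mapsto x^{-1}$ and harmonicity of $|x|^{-2}$ in $\mathbb{R}^4$) where the paper merely asserts it "can be verified by a direct computation"; your derivation is correct and consistent with \eqref{laplacetheta}.
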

\begin{proof}
To prove the formula we apply the Laplacian on both sides of the equation $\theta_L(x) = |\det(L)| x^{-\frac{n+1}{2}} \theta_{L^{\sharp}}(x^{-1})$. It is well known that the Laplacian applied to a product of two real-valued functions $f_A,g_B: \mathbb{R}^{n+1} \to \mathbb{R}$ satisfies the product rule
$$
\Delta(f_A \cdot g_B) = f_A(\Delta g_B) + (\Delta f_A) g_B + 2 \langle {\rm grad}\;f_A,{\rm grad} g_B\rangle.
$$
Now suppose that $f$ and $g$ are $\mathbb{R}_n$-valued functions, represented in the form $f(x) = \sum\limits_{A \subseteq \{1,\ldots,n\}} f_A(x) e_A$ and  $g(x) = \sum\limits_{B \subseteq \{1,\ldots,n\}} g_B(x) e_B$. Since $\Delta$ is a scalar-valued operator the previous formula gets the following form in the Clifford algebra valued case:
$$
\Delta(f \cdot g) = f (\Delta g) + (\Delta f) g + 2  \sum\limits_{A,B \subseteq \{1,2,\ldots,n\}}\langle {\rm grad}\;f_A,{\rm grad} g_B\rangle e_A e_B.
$$
Setting $f(x) = x^{-\frac{n+1}{2}}$ and $g(x) = \theta_{L^{\sharp}}(x^{-1})$ leads to the formula (\ref{monthetatrafo}). Applying next the formula
$$
\Delta_x [f(x^{-1})] = \frac{(\Delta_x f)(x^{-1})}{|x^4|} + \frac{1}{|x|^4} \Bigg(-4 \frac{\partial f}{\partial x_0}(x^{-1}) + 4 \sum\limits_{i=1}^3 \frac{\partial f}{\partial x_i}(x^{-1})      \Bigg)
$$
which can be verified by a direct computation,
leads  to (\ref{laplacetheta}) since $\Theta_L^M(x) = \Delta_x \Theta_L(x)$.
\end{proof}	
{\bf Statements and Declarations}. On behalf of all co-authors, the corresponding author states that there is no conflict of interest. There  are no data in support the findings of the study. Open access funding enabled and organized by Projekt DEAL.

\end{document}